\newtheorem{remark}{Remark}
\pgfplotsset{compat=newest}
\newcommand{\onelinewidth}{0.96\linewidth}
\newcommand{\halflinewidth}{0.48\linewidth}
\newcommand{\cC}{\mathcal{C}}
\newcommand{\cL}{\mathcal{L}}
\newcommand{\cO}{\mathcal{O}}
\newcommand{\cH}{\mathcal{H}}
\newcommand{\der}[2]{\frac{\partial{#1}}{\partial{#2}}}
\renewcommand{\d}{\mathrm{d}}
\newcommand{\R}{\mathbb{R}}
\newcommand{\jacobi}[1]{\{#1\}_{\eta}}
\newcommand{\jacobilr}[1]{\left\lbrace#1\right\rbrace_{\eta}}
\DeclareMathOperator{\ext}{ext}
\newtheorem{definition}{Definition}
\newtheorem{proposition}{Proposition}
\newtheorem{corollary}{Corollary}
\theoremstyle{definition}
\newtheorem{example}{Example}
\title{Numerical integration in celestial mechanics: a case for contact geometry}
\author[1,2]{Alessandro Bravetti}
\author[3]{Marcello Seri}
\author[4]{Mats Vermeeren}
\author[5]{Federico Zadra}
\affil[1]{Centro de Investigación en Matemáticas (CIMAT), Guanajuato, Mexico,}
\affil[2]{Instituto de Investigaciones en Matemáticas Aplicadas y en Sistemas (IIMAS--UNAM), Mexico City, Mexico,
 \authorcr\normalsize \texttt{alessandro.bravetti@cimat.mx}}
\affil[3]{Bernoulli Institute for Mathematics, Computer Science and Artificial Intelligence, \authorcr\small Groningen, The Netherlands,
\authorcr\normalsize \texttt{m.seri@rug.nl}}
\affil[4]{Technische Universität Berlin, Germany, \authorcr\normalsize \texttt{vermeeren@math.tu-berlin.de}}
\affil[5]{Bernoulli Institute for Mathematics, Computer Science and Artificial Intelligence, \authorcr\small Groningen, The Netherlands,
\authorcr\normalsize \texttt{f.zadra@rug.nl}}
\date{}
\begin{document}

\maketitle
\abstract{
    \noindent
    Several dynamical systems of interest in celestial mechanics can be written in the form of {a Newton equation with time--dependent damping, linear in the velocities}.
    For instance, the modified Kepler problem, the spin--orbit model and the Lane--Emden equation all belong to such class.
    In this work we start an investigation of these models from the point of view of contact geometry. In particular we focus on the (contact) Hamiltonisation of these models and on the construction of the corresponding geometric integrators.

    \medskip
    \noindent
    \textbf{Keywords:} contact geometry, geometric integrators, modified Kepler, spin--orbit, Lane--Emden

    \noindent
    \textbf{MSC2010:} 65D30, 34K28, 34A26
}

\section{Introduction}

Geometric methods in the study of dynamical systems have proven to be useful both for analytical and for numerical investigations~\cite{hairer2006geometric,marsden2001discrete,sanz1992symplectic}.
When the system has a Hamiltonian form, it is possible to exploit the geometric structure of the problem and provide powerful tools to study and classify the dynamics.
One of the central properties of Hamiltonian systems is the conservation of energy. Therefore, it is not surprising that finding a Hamiltonian formulation for second order differential equations with non--conservative forces {is considered a very hard, if not hopeless, problem}.

In this work we consider the important class of systems
\begin{equation}%
	\label{eq:geneq}
    \ddot q + \frac{\partial V(q,t)}{\partial q}+f(t)\dot q=0\,.
\end{equation}
This class arises from the Newtonian mechanics of systems with time--varying non--conservative forces,
and includes systems of primary interest in celestial mechanics such as the modified Kepler problem, the spin--orbit model and the Lane--Emden equation, which will be analysed in {more} detail below.

Our first crucial remark is that all systems of the form~\eqref{eq:geneq} can be given a Hamiltonian formulation in the context of contact geometry: Section~\ref{sec:contact} will be devoted to review contact Hamiltonian systems and make the previous statement precise.

We also argue that this geometric reformulation of equation~\eqref{eq:geneq} can be useful in order to study the dynamics, both from an analytical and from a numerical perspective.
Referring the analytical study to future investigations, we start in this paper a thorough investigation of numerical methods for the analysis of such systems based on contact geometry. In particular, in Section~\ref{sec:integrator} we develop both Lagrangian and Hamiltonian integrators for contact systems, and then
in Section~\ref{sec:examples} we consider
the modified Kepler problem, the spin--orbit model and Lane--Emden equation, and show with numerical tests that the contact perspective can improve over previously presented methods to study the dynamics numerically.

Finally, in Section~\ref{sec:conclusions} we provide a summary of results and discuss future directions.

\section{Contact dynamics}%
	\label{sec:contact}
In this section we review briefly the basic notions of contact geometry and dynamics, both from the Hamiltonian and from the Lagrangian perspective.

\subsection{Contact geometry}

Contact geometry is the ``odd--dimensional cousin'' of symplectic geometry. However, it has received much less attention in the scientific literature until very recently (with the exception of contact topology~\cite{geiges2008introduction}).
One of the reasons may be that
the definition of a contact manifold is somewhat involved, and therefore we decide here to present the general definition together with a more restrictive one, which is all is needed for the present work.
{In general,} a \emph{contact manifold} is a $(2n+1)$--dimensional manifold $M$ endowed with a contact structure $\mathcal{D}$, that is, a maximally non--integrable distribution of hyperplanes, meaning that its integrable submanifolds have dimension at most $n$.
Here, borrowing the terminology from~\cite{ciaglia2018contact}, we restrict to the following important case.
\begin{definition}%
	\label{contactM}
    An \emph{exact contact manifold} is a $(2n+1)$--dimensional manifold $M$ endowed with a contact structure $\mathcal{D}$ which is given globally as the kernel of a $1$--form $\eta$ satisfying the non--degeneracy condition $\eta\wedge(\d\eta)^n\neq 0$.
\end{definition}
It is easy to verify that any exact contact manifold is a contact manifold, but that the converse is not true (for instance, if the manifold $M$ is not orientable, then it can admit a contact structure, but it cannot admit a global $1$--form satisfying the non--degeneracy condition needed in the definition of an exact contact manifold).
In this work we will always assume that $(M,\mathcal{D})$ is an exact contact manifold.

Notice also that since $\mathcal{D}=\ker(\eta)$, we have in fact an equivalence class of $1$--forms that define the same $\mathcal{D}$. These are given by re--scaling $\eta$ by multiplication by a non--vanishing function.
In the following definition we make this statement precise.

\begin{definition}%
	\label{contactomorphism}
    Given an exact contact manifold $(M,\mathcal{D})$ and a representative $\eta$ such that $\mathcal{D}=\ker(\eta)$, a diffeomorphism $f:M\rightarrow M$ is called a \emph{contactomorphism} if it satisfies $f^*\eta=\rho_f\,\eta$, where $f^*$ is the pullback induced by $f$ and $\rho_f:M\rightarrow\mathbb{R}$ is any non--vanishing function.
    Analogously, a vector field $X\in\mathfrak{X}(M)$ is called a \emph{contact vector field} if $\pounds_X\eta=\rho_X\eta$, where $\pounds$ is the Lie derivative and $\rho_X$ is any function (the case when $\rho_X=0$ is called sometimes a \emph{strict contact vector field}).
\end{definition}

\begin{example}
    The standard example of a contact manifold is $M=\mathbb{R}^{2n+1}$ with
    Cartesian coordinates $(q^i,p_i,s)$, for $i=1,\dots,n$ and
    the standard contact structure $\mathcal{D}=\ker\left(\d s-\sum_{a=1}^n p_a\d q^a\right)$.
\end{example}

\subsection{Contact Hamiltonian systems}

We are now in the position to define Hamiltonian systems on (exact) contact manifolds.
To do so, we will first fix a representative $1$--form and then define the given Hamiltonian vector field associated to any function $\cH:M\times \mathbb{R}\rightarrow \mathbb{R}$, where the additional dimension is used to account for time--dependence.

\begin{definition}
    Let $(M,\mathcal{D})$ be an exact contact manifold and fix a representative $1$--form $\eta$ generating the contact structure. For any function
    $\cH:  M \times \R \rightarrow \mathbb{R}$
    we define the corresponding \emph{contact Hamiltonian vector field} $X_{\cH}$ by
    \begin{equation}%
	\label{XCH}
        \pounds_{X_{\cH}}\eta=\rho_H\eta,\qquad \iota_{X_{\cH}} \eta=-\cH\,,
    \end{equation}
    where $\pounds$ is the Lie derivative, and $\iota$ represents the interior product.
\end{definition}

Note that the first condition in~\eqref{XCH} is the requirement that $X_{\cH}$ be a contact vector field, while the second condition is the association between the vector field and its Hamiltonian function.

We refer the interested reader to {the classical textbook~\cite{arnold} for an introduction to contact geometry, and to~\cite{Bravetti2017a,bravetti2018contact} for an overview of physical applications. More specifically, in~\cite{bravetti2017contact,deleon2017cos,de2019singular,gaset2019contact,gaset2019new,valcazar2018contact}
one can find a detailed study of the contact geometry of dissipative systems in classical mechanics and field theories, while in~\cite{ciaglia2018contact} 
an approach to study dissipative systems in quantum mechanics via contact systems is investigated.}
Here we limit ourselves to collect some relevant properties in the following proposition.

\begin{proposition}
    In the neighbourhood of any point $P\in M$, there always exist local coordinates $(q,p,s)$, called \emph{Darboux coordinates}, such that $\eta=ds-p_a\d q^a$. In such coordinates $X_{\cH}$ takes the form
    \begin{equation}%
	\label{CXH}
        X_{\cH}=
        \frac{\partial \cH}{\partial p_a}\frac{\partial }{\partial q^a}
        +\left(-\frac{\partial \cH}{\partial q^a}-p_a\frac{\partial \cH}{\partial s}\right)\frac{\partial }{\partial p_a}
        +\left(\frac{\partial \cH}{\partial p_a}p_a-\cH\right)\frac{\partial }{\partial s} ,
    \end{equation}
    where summation over repeated indices is assumed from now on. 
    The corresponding contact Hamiltonian equations are given by
    \begin{align}
        \dot q^a &=
        \frac{\partial \cH}{\partial p_a}%
	\label{CHeq1}\\
        \dot p_a &=-\frac{\partial \cH}{\partial q^a}-p_a\frac{\partial \cH}{\partial s}%
	\label{CHeq2}\\
        \dot s &=\frac{\partial \cH}{\partial p_a}p_a-\cH\label{CHeq3}\,.
    \end{align}
\end{proposition}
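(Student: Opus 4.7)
The plan is to treat the two assertions separately. The first, concerning the existence of Darboux coordinates, is the classical Darboux theorem for contact manifolds, which I would simply invoke from the literature (for instance~\cite{arnold}): given any exact contact manifold $(M,\mathcal D)$ with a chosen representative $1$--form $\eta$, around every point one can find local coordinates $(q^a,p_a,s)$ such that $\eta=\d s-p_a\,\d q^a$. I would not reprove it, since its derivation follows the standard inductive Moser--type argument that is orthogonal to the Hamiltonian content of the proposition.

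The second part is a direct coordinate calculation once the normal form of $\eta$ is fixed. I would write the unknown vector field in Darboux coordinates as $X_{\cH}=A^a\partial_{q^a}+B_a\partial_{p_a}+C\partial_s$ and extract $A^a,B_a,C$ from the two defining conditions~\eqref{XCH}. The interior product condition is immediate: $\iota_{X_{\cH}}\eta=C-p_a A^a=-\cH$. To exploit the Lie derivative condition I would use Cartan's magic formula,
\begin{equation}
\pounds_{X_{\cH}}\eta=\iota_{X_{\cH}}\d\eta+\d(\iota_{X_{\cH}}\eta)=\iota_{X_{\cH}}\d\eta-\d\cH,
\end{equation}
and compute $\d\eta=\d q^a\wedge\d p_a$, so that $\iota_{X_{\cH}}\d\eta=A^a\,\d p_a-B_a\,\d q^a$. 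Setting the resulting $1$--form equal to $\rho_{\cH}(\d s-p_a\,\d q^a)$ and matching the coefficients of $\d s$, $\d p_a$ and $\d q^a$ in the coframe $\{\d q^a,\d p_a,\d s\}$ yields successively $\rho_{\cH}=-\partial\cH/\partial s$, then $A^a=\partial\cH/\partial p_a$, and finally $B_a=-\partial\cH/\partial q^a-p_a\,\partial\cH/\partial s$. Substituting $A^a$ back into the interior product equation gives $C=A^a p_a-\cH$, and assembling the pieces reproduces~\eqref{CXH}. Reading $\dot q^a,\dot p_a,\dot s$ as the components of $X_{\cH}$ along the coordinate basis then delivers~\eqref{CHeq1}--\eqref{CHeq3}.

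There is no real obstacle here beyond bookkeeping: the whole argument is a linear algebra exercise on the cotangent basis once Darboux's theorem is accepted. The only delicate point is sign management, specifically the orientation of $\d\eta$ and the minus sign produced by Cartan's formula via $\d\iota_{X_{\cH}}\eta=-\d\cH$; it is precisely these signs that encode the asymmetry between~\eqref{CHeq1} and~\eqref{CHeq2}, and in particular the non--conservative term $-p_a\,\partial\cH/\partial s$ which distinguishes contact Hamiltonian dynamics from its symplectic counterpart.
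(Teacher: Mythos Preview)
Your proposal is correct and follows exactly the approach the paper itself indicates: it refers the Darboux theorem to the literature (the paper cites~\cite{geiges2008introduction}, you cite~\cite{arnold}) and then derives the coordinate expression of $X_{\cH}$ by imposing the two conditions~\eqref{XCH} in Darboux coordinates via Cartan's identity. In fact you have spelled out the computation that the paper only alludes to in one sentence, and your sign bookkeeping is correct throughout.
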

\noeqref{CHeq1}\noeqref{CHeq2}\noeqref{CHeq3}
{The proof of the first part of the above proposition (Darboux theorem) can be found e.g.~in~\cite{geiges2008introduction}. The second part (the coordinate expression of $X_{\cH}$) can be  obtained by expressing the two conditions in~\eqref{XCH} in Darboux coordinates and using Cartan's identity.}

The following property guarantees that contact vector fields come with an associated Hamiltonian function, and will be essential for the construction of the contact Hamiltonian integrators {(for the proof see e.g.~\cite{valcazar2018contact})}.

\begin{proposition}[Isomorphism between functions and contact vector fields]\label{prop:isomorphism}
    Let $(M,\mathcal{D})$ be an exact contact manifold and fix a representative $1$--form $\eta$. Then $\eta$ induces an isomorphism of Lie algebras, between the Lie algebra of contact vector fields (infinitesimal generators of contactomorphisms) with the standard Lie bracket and the Lie algebra of functions on $M$ with the \emph{Jacobi bracket} defined as
    \begin{equation}%
	\label{Jacobibracket}
        \jacobi{g,f} = \iota_{[X_g,X_f]} \eta\,.
    \end{equation}
    In particular, to every contact vector field $X$ we can associate uniquely a contact Hamiltonian $\cH_X$ by means of $\iota_{X} \eta=-\cH_X$ (cf.~the second condition in~\eqref{XCH}).
In Darboux coordinates, the Jacobi bracket reads
\begin{equation}
    \jacobi{g,f} = \left(g \frac{\partial f}{\partial s} - \frac{\partial g}{\partial s} f\right) + p_{\mu} \left(\frac{\partial g}{\partial s} \frac{\partial f}{\partial p_{\mu}} - \frac{\partial g}{\partial p_{\mu}} \frac{\partial f}{\partial s}\right) + \left(\frac{\partial g}{\partial q^{\mu}} \frac{\partial f }{\partial p_{\mu}} - \frac{\partial g}{\partial p_{\mu}} \frac{\partial f }{\partial q^{\mu}}\right). \label{Jacobibracketcoordinates}
\end{equation}
\end{proposition}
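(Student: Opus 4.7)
The plan is to establish three things in order: (i) the assignment $X \mapsto \cH_X := -\iota_X \eta$ is a bijection between contact vector fields and smooth functions on $M$; (ii) under this bijection the commutator of vector fields corresponds to the bracket \eqref{Jacobibracket} on functions, so the two Lie algebras are (anti)--isomorphic; (iii) the coordinate expression \eqref{Jacobibracketcoordinates} holds in Darboux coordinates.

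For (i), I would first invoke non--degeneracy of $\eta\wedge(\d\eta)^n$ to produce the Reeb vector field $R$ characterised by $\iota_R \eta = 1$ and $\iota_R \d\eta = 0$. Given a function $\cH$, Cartan's magic formula $\pounds_X \eta = \iota_X \d\eta + \d(\iota_X \eta)$ combined with the defining conditions \eqref{XCH} yields $\iota_X \d\eta = \d\cH + \rho_X \eta$; contracting with $R$ pins down $\rho_X = -R(\cH)$. The remaining system $\iota_X \eta = -\cH$ and $\iota_X \d\eta = \d\cH - R(\cH)\,\eta$ has a unique solution because the bundle morphism $X \mapsto (\iota_X \eta)\eta + \iota_X \d\eta$ from $TM$ to $T^*M$ is a pointwise isomorphism, which is yet another reformulation of non--degeneracy. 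This provides the inverse to $X \mapsto -\iota_X \eta$ and hence bijectivity.

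For (ii), a short calculation using the Jacobi identity for the Lie derivative shows that the commutator of two contact vector fields is again a contact vector field, so the right--hand side of \eqref{Jacobibracket} is well defined. Bilinearity and antisymmetry of $\{\cdot,\cdot\}_\eta$ follow from those of the commutator, and the Jacobi identity for $\{\cdot,\cdot\}_\eta$ is transported from the Jacobi identity of vector fields through the bijection in (i). This promotes the bijection to the claimed Lie algebra isomorphism (with the sign convention fixed by the definition of $\cH_X$).

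For (iii), the calculation is a direct substitution: one plugs the Darboux expression \eqref{CXH} for $X_g$ and $X_f$ into the commutator $[X_g, X_f]$, expands, and contracts with $\eta = \d s - p_a \d q^a$. The main obstacle is not conceptual but combinatorial, namely keeping careful track of the three summands of \eqref{CXH} and in particular of the $s$--derivative contributions, which are exactly what distinguishes the contact Jacobi bracket from the symplectic Poisson bracket. After the cancellations, the three blocks of \eqref{Jacobibracketcoordinates} appear naturally: the last block is the usual canonical Poisson part, while the first two encode the explicit $s$--dependence characteristic of contact geometry.
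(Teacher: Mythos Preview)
Your outline is correct and follows the standard route for this result. Note, however, that the paper does not actually supply a proof of this proposition: it simply refers the reader to~\cite{valcazar2018contact}. So there is no in--paper argument to compare your approach against; you have filled in what the paper outsources.

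Two small observations worth recording. First, you correctly flag the sign subtlety in step~(ii): with the conventions $\iota_{X_\cH}\eta = -\cH$ and $\jacobi{g,f} = \iota_{[X_g,X_f]}\eta$, the map $f \mapsto X_f$ satisfies $[X_f,X_g] = -X_{\jacobi{f,g}}$, i.e.\ it is an \emph{anti}--homomorphism. The paper itself uses exactly this relation later, in Section~\ref{sec:modifiedhamiltonian}, when applying the BCH formula. The word ``isomorphism'' in the statement is therefore slightly loose, and your parenthetical ``(anti)'' is the accurate reading. Second, in step~(iii) you propose computing $[X_g,X_f]$ in full and then contracting with $\eta$; a marginally shorter route is to use $\iota_{[X_g,X_f]}\eta = \pounds_{X_g}(\iota_{X_f}\eta) - \iota_{X_f}(\pounds_{X_g}\eta) = -X_g(f) + \iota_{X_f}(R(g)\,\eta) = -X_g(f) - f\,R(g)$, which in Darboux coordinates (where $R = \partial_s$) immediately yields~\eqref{Jacobibracketcoordinates} after expanding $X_g(f)$ via~\eqref{CXH}. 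Either way the computation is mechanical, as you note.
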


Even though the Jacobi bracket can be understood as a generalisation of the Poisson bracket, the equations of motion are not the same as for the Poisson bracket.
As one can verify by direct calculation, there holds $\frac{df}{dt} = \jacobi{f,H} - f \frac{\partial H}{\partial s}$ for any function $f(q,p,s)$.

The following proposition is the starting point for our analysis of equation~\eqref{eq:geneq} in terms of contact geometry.

\begin{proposition}
\label{prop:mainCH}
    Equation~\eqref{eq:geneq} corresponds to
    the flow of the contact Hamiltonian
    \begin{equation}%
	\label{eq:mainCH}
        \cH(p,q,s,t)=\sum_{a=1}^n\frac{p_a^2}{2}+V(q,t)+f(t)\,s\,.
    \end{equation}
\end{proposition}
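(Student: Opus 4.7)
The plan is a direct substitution: one simply plugs the Hamiltonian \eqref{eq:mainCH} into the contact Hamiltonian equations~\eqref{CHeq1}--\eqref{CHeq3} and reads off the resulting system, then eliminates the momenta to recover \eqref{eq:geneq}. Since Proposition~\ref{prop:mainCH} asserts only that the flows coincide, no deeper geometric statement needs to be checked beyond this computation in Darboux coordinates.

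Concretely, I would first record the three partial derivatives
\[
\frac{\partial \cH}{\partial p_a}=p_a,\qquad \frac{\partial \cH}{\partial q^a}=\frac{\partial V}{\partial q^a},\qquad \frac{\partial \cH}{\partial s}=f(t).
\]
Substituting into~\eqref{CHeq1} gives $\dot q^a=p_a$, which identifies the momenta with the velocities. Substituting into~\eqref{CHeq2} yields
\[
\dot p_a=-\frac{\partial V}{\partial q^a}-f(t)\,p_a.
\]
Differentiating $\dot q^a=p_a$ in time and inserting the previous display then produces
\[
\ddot q^a+\frac{\partial V(q,t)}{\partial q^a}+f(t)\dot q^a=0,
\]
which is exactly~\eqref{eq:geneq}.

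The third equation~\eqref{CHeq3} reads $\dot s=\tfrac{1}{2}\sum_a p_a^2-V(q,t)-f(t)\,s$; it determines the evolution of the extra contact variable $s$ but does not couple back into the $(q,p)$-dynamics, so it plays no role in recovering Newton's equation. I would briefly remark on it only to emphasise that $s$ is an auxiliary variable whose interpretation (an action-like quantity) will matter later when designing Lagrangian integrators.

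There is no genuine obstacle here; the only point to be careful about is the explicit $t$-dependence of $V$ and $f$, which is accommodated by the extension from $M$ to $M\times\R$ used in the definition of $X_{\cH}$. In particular, $\cH$ is not conserved along its own flow, but this is consistent with the fact that~\eqref{eq:geneq} describes a non--conservative system, and it is the whole motivation for turning to the contact formalism rather than the symplectic one.
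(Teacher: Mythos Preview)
Your proposal is correct and follows exactly the same route as the paper: substitute \eqref{eq:mainCH} into \eqref{CHeq1}--\eqref{CHeq3}, obtain the three first--order equations, observe that the first two combine to give \eqref{eq:geneq}, and note that the $s$--equation decouples. The only difference is that you spell out the partial derivatives and the elimination of $p_a$ more explicitly than the paper does.
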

\begin{proof}
    To prove the above statement, observe that the Hamiltonian equations~\eqref{CHeq1}--\eqref{CHeq3} in this case read
    \begin{eqnarray}
        \dot q_a&=&
        p_a\label{mainCHeq1}\\
        \dot p_a&=&-\frac{\partial V(q,t)}{\partial q^a}-f(t)p_a\label{mainCHeq2}\\
        \dot s&=&\sum_{a=1}^n\frac{p_a^2}{2}-V(q,t)-f(t)\,s\label{mainCHeq3}\,.
    \end{eqnarray}
    \noeqref{mainCHeq1}\noeqref{mainCHeq2}\noeqref{mainCHeq3}
    One can immediately check that the system~\eqref{mainCHeq1}--\eqref{mainCHeq2}, gives exactly the
    system~\eqref{eq:geneq}, while equation~\eqref{mainCHeq3} decouples from the rest.
\end{proof}

Using contact geometry, we have immediately obtained a ``Hamiltonisation'' of all the dynamical systems of the form~\eqref{eq:geneq}.
This fact should not be underestimated: a Hamiltonian structure for all such systems
allows us to benefit from the theory of Hamiltonian systems (extended to the contact case) and its powerful analytical and numerical tools.
For instance, one can apply weak--KAM theorems and variational methods, as done e.g.~in~\cite{cannarsa2019herglotz,liu2018contact,wang2016implicit,wang2019aubry}.

It is important to also compare the simplicity and generality of the formulation provided here against previous attempts in the literature.
For instance, in~\cite{gkolias2017hamiltonian} an algorithm for the symplectic Hamiltonisation of systems of the type~\eqref{eq:geneq} has been provided. However, the construction suggested there is based on a non--trivial reparameterisation that requires solving an additional differential equation in order to obtain the new time variable (which in many cases cannot be done exactly, cf.~\cite{gkolias2017hamiltonian}). 
We stress that in our analysis we do not encounter {any} such  complication. 

\subsection{Herglotz' variational principle}%
	\label{sec:varPri}

As for symplectic Hamiltonian systems, the dynamics of contact Hamiltonian systems can be characterised by a variational principle. This was originally published by Herglotz in a set of lecture notes~\cite{herglotz1930vorlesungen}, which might explain why it has received relatively little attention. A modern discussion of Herglotz' variational principle can be found for example in~\cite{georgieva2003generalized,vermeeren2019contact} (see also~\cite{gaset2019contact,lazo2017action,lazo2018action} for extensions to field theories).

\begin{definition}[Herglotz' variational principle]
	Let $Q$ be an $n$--dimensional manifold with local coordinates $q^i$ and let $\cL: \R \times TQ \times \R \rightarrow \R$. For any given smooth curve $q:[0,T] \rightarrow Q$ we consider the initial value problem
	\begin{equation}%
	\label{eq:action}
	\dot{s} = \cL(t,q(t),\dot{q}(t),s), \qquad s(0) = s_\mathrm{init}.
	\end{equation}
	Then the value $s(T)$ is a functional of the curve $q$. We say that $q$ is \emph{a critical curve} if $s(T)$ is invariant under infinitesimal variations of $q$ that vanish at the boundary of $[0,T]$.
\end{definition}

If the Lagrange function does not depend on $s$, then we find
\[ s(T) = \int_0^T  \cL(t,q(t),\dot{q}(t)) \, \d t , \]
which is the usual action functional from symplectic mechanics. Hence the classical formulation of Lagrangian mechanics is a special case of Herglotz' variational principle.

\begin{proposition}
	Critical curves for the Herglotz' variational principle are characterised by the following generalised Euler--Lagrange equations:
	\begin{equation}%
	\label{eq:gEL}
	\der{\cL}{q^a} - \frac{\d}{\d t} \der{\cL}{\dot{q}^a} + \der{\cL}{s}\der{\cL}{\dot{q}^a} = 0.
	\end{equation}
\end{proposition}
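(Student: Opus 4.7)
The plan is to compute the first variation of the functional $q \mapsto s(T)$ directly from the defining ODE. Consider a one-parameter family of curves $q^a_\varepsilon(t) = q^a(t) + \varepsilon\,\xi^a(t)$ with $\xi^a(0) = \xi^a(T) = 0$, and let $s_\varepsilon(t)$ denote the corresponding solution of the initial value problem~\eqref{eq:action}. Writing $\sigma(t) = \frac{\partial}{\partial\varepsilon}\big|_{\varepsilon=0} s_\varepsilon(t)$, I differentiate~\eqref{eq:action} in $\varepsilon$ at $\varepsilon=0$ to obtain the linear inhomogeneous ODE
\begin{equation*}
\dot{\sigma} = \der{\cL}{q^a}\xi^a + \der{\cL}{\dot q^a}\dot\xi^a + \der{\cL}{s}\sigma, \qquad \sigma(0)=0,
\end{equation*}
where all partial derivatives of $\cL$ are evaluated along the unperturbed curve.

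Next I would introduce the integrating factor $\mu(t) = \exp\!\bigl(-\int_0^t \der{\cL}{s}\,\d\tau\bigr)$, so that $\dot\mu = -\mu\,\der{\cL}{s}$ and the ODE becomes $\frac{\d}{\d t}(\mu\sigma) = \mu\bigl(\der{\cL}{q^a}\xi^a + \der{\cL}{\dot q^a}\dot\xi^a\bigr)$. Integrating from $0$ to $T$ and using $\sigma(0)=0$ yields
\begin{equation*}
\mu(T)\,\sigma(T) = \int_0^T \mu(t)\!\left(\der{\cL}{q^a}\xi^a + \der{\cL}{\dot q^a}\dot\xi^a\right)\d t.
\end{equation*}
The criticality condition is $\sigma(T)=0$ for all admissible $\xi$.

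Then I would integrate the $\dot\xi^a$ term by parts. Using $\xi^a(0)=\xi^a(T)=0$ and the identity $\frac{\d}{\d t}\bigl(\mu\,\der{\cL}{\dot q^a}\bigr) = \mu\,\frac{\d}{\d t}\der{\cL}{\dot q^a} - \mu\,\der{\cL}{s}\der{\cL}{\dot q^a}$, the criticality condition rewrites as
\begin{equation*}
\int_0^T \mu(t)\left[\der{\cL}{q^a} - \frac{\d}{\d t}\der{\cL}{\dot q^a} + \der{\cL}{s}\der{\cL}{\dot q^a}\right]\xi^a(t)\,\d t = 0
\end{equation*}
for every variation $\xi$ vanishing at the endpoints. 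Since $\mu>0$ everywhere and the $\xi^a$ are arbitrary, the fundamental lemma of the calculus of variations yields~\eqref{eq:gEL}.

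The only genuinely non-routine step is recognising that the $\der{\cL}{s}$--term in the variational ODE for $\sigma$ is what forces the use of the integrating factor $\mu$, and that conveniently the same $\mu$ reproduces the third term of~\eqref{eq:gEL} after the integration by parts; once this is observed, the remainder is the standard variational argument.
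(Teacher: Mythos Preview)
Your proof is correct and follows essentially the same route as the paper: both derive the linear ODE for the variation $\delta s$ (your $\sigma$), solve it with the integrating factor $\exp\!\bigl(-\int_0^t \partial\cL/\partial s\bigr)$, integrate by parts, and invoke the fundamental lemma. Your write-up is in fact more explicit than the paper's, which simply states the solved form of $\delta s(T)$ without spelling out the integrating-factor step.
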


\begin{proof}
The generalised Euler--Lagrange equations can be derived by solving the differential equation
\[ \delta \dot{s} = \der{\cL}{q^a} \delta q^a + \der{\cL}{\dot{q}^a} \delta \dot{q}^a + \der{\cL}{s} \delta s . \]
We find
\begin{equation}%
	\label{eq:variation}
\begin{split}
\delta s(T) &= \exp\left(\int_0^T \der{\cL}{s} \right) \int_0^T \left(\der{\cL}{q^a} - \frac{\d}{\d t} \der{\cL}{\dot{q}^a} + \der{\cL}{s}\der{\cL}{\dot{q}^a} \right) \delta q^a \exp\left(-\int_0^T \der{\cL}{s} \right) \d t \\
&\quad + \der{\cL}{\dot{q}^a}(T) \delta q^a(T) + \left(\delta s(0) - \der{\cL}{\dot{q}^a}(0) \delta q^a(0) \right) \exp\left(\int_0^T \der{\cL}{s} \right),
\end{split}
\end{equation}
where the boundary terms on the second line vanish because variations leave the endpoints fixed.
\end{proof}

If we restrict our attention to solutions to the generalised Euler--Lagrange equations, but allow variations of the endpoint, equation~\eqref{eq:variation} reduces to
\[ \delta s(T) - p_a(T) \delta q^a(T) = \left(\delta s(0) - p_a(0) \delta q^a(0) \right) \exp\left(\int_0^T \der{\cL}{s} \right), \]
where $p_a = \der{\cL}{\dot{q}^a}$. This means that the flow consists of contact transformations with respect to the 1--form $\eta = \d s - p_a \d q^a$.

One can verify that for the Hamiltonian $\cH(t,q,p,s) = p \dot{q} - \cL(t,q,\dot{q},s)$, where $\dot{q}$ is written in function of $p$ and $q$, the equations~\eqref{CHeq1}--\eqref{CHeq3} are equivalent to the system consisting of equation~\eqref{eq:action} and the generalised Euler--Lagrange equations~\eqref{eq:gEL}.

There is a natural discretisation of Herglotz' variational principle, which was introduced in~\cite{vermeeren2019contact}.

\begin{definition}[Discrete Herglotz' variational principle]
	Let $Q$ be an $n$--dimensional manifold with local coordinates $q^i$ and let $L: \R \times Q^2 \times \R^2 \rightarrow \R$. For any given discrete curve $q: \{0,\ldots,N\} \rightarrow Q$ we consider the initial value problem
	\begin{equation}%
	\label{eq:daction}
	s_{k+1} = s_k + \tau L(k\tau,q_k,q_{k+1},s_k, s_{k+1}), \qquad s_0 = s_\mathrm{init}.
	\end{equation}
	Then the value $s_N$ is a functional of the discrete curve $q$. We say that $q$ is 
	\emph{a critical curve} if
	\[ \der{s_N}{q_k} = 0 \qquad \forall k \in \{1,\ldots,N-1\}\,. \]
\end{definition}
Equivalently, we can require that $\der{s_{k+1}}{q_k} = 0$ for all $k \in \{1,\ldots,N-1\}$. By elementary calculations, this gives us the discrete generalised Euler--Lagrange equations. As in the conventional discrete calculus of variations, they can be formulated as the equality of two formulas for the momentum~\cite{marsden2001discrete}.

\begin{proposition}%
	\label{prop-dgEL}
Let
\begin{align*}
	p_k^- &= \frac{ \displaystyle \der{}{q_k} L((k-1)\tau,q_{k-1},q_k,s_{k-1}, s_k) }{ \displaystyle 1 - \tau \der{}{s_k} L((k-1)\tau,q_{k-1},q_k,s_{k-1}, s_k) }\,, \\
	p_k^+ &= -\frac{ \displaystyle \der{}{q_k} L(k\tau,q_k,q_{k+1},s_k, s_{k+1}) }{ \displaystyle 1 + \tau \der{}{s_k} L(k\tau,q_k,q_{k+1},s_k, s_{k+1}) }\,.
	\end{align*}
Then solutions to the discrete Herglotz variational principle are characterised by
\[ p_k^- = p_k^+ . \]
Furthermore, the map $(q_k,p_k,s_k) \mapsto (q_{k+1},p_{k+1},s_{k+1})$ induced by a critical discrete curve preserves the contact structure $\ker(\d s - p_a \d q^a)$.
\end{proposition}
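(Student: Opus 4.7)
The plan is to split the proof into two parts, both relying on implicit differentiation of the update rule \eqref{eq:daction}. Throughout write $L_{j,j+1}$ for $L(j\tau,q_j,q_{j+1},s_j,s_{j+1})$ and assume the non--degeneracy $1-\tau\,\der{L_{j,j+1}}{s_{j+1}}\neq 0$, which via the implicit function theorem makes every $s_{j+1}$ a smooth function of $(q_j,q_{j+1},s_j)$.

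For the discrete Euler--Lagrange part I would work with the equivalent criticality condition $\der{s_{k+1}}{q_k}=0$ stated after the definition. Differentiating the previous update $s_k=s_{k-1}+\tau L_{k-1,k}$ implicitly in $q_k$ with $(q_{k-1},s_{k-1})$ frozen, and solving the resulting linear relation, yields $\der{s_k}{q_k}=\tau p_k^-$. Applying the same procedure to $s_{k+1}=s_k+\tau L_{k,k+1}$ with $q_{k+1}$ frozen, propagating $\der{s_k}{q_k}$, and using the definition of $p_k^+$ to rewrite $\tau\,\der{L_{k,k+1}}{q_k}$, one arrives at a relation of the form
\[
\Bigl(1-\tau\,\der{L_{k,k+1}}{s_{k+1}}\Bigr)\der{s_{k+1}}{q_k}=\tau\Bigl(1+\tau\,\der{L_{k,k+1}}{s_k}\Bigr)\bigl(p_k^- - p_k^+\bigr).
\]
Under the genericity assumption this identifies $\der{s_{k+1}}{q_k}=0$ with $p_k^-=p_k^+$. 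The equivalence with the condition $\der{s_N}{q_k}=0$ in the definition then follows by the chain rule along the steps $s_{k+1}\mapsto\dots\mapsto s_N$, whose intermediate Jacobians are non--vanishing for the same reason.

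For the contact property I would take the total differential of $s_{k+1}-s_k=\tau L_{k,k+1}$, treating all four of $q_k,q_{k+1},s_k,s_{k+1}$ as variables, to obtain
\[
\Bigl(1-\tau\,\der{L_{k,k+1}}{s_{k+1}}\Bigr)ds_{k+1}-\tau\,\der{L_{k,k+1}}{q_{k+1}}dq_{k+1}=\Bigl(1+\tau\,\der{L_{k,k+1}}{s_k}\Bigr)ds_k+\tau\,\der{L_{k,k+1}}{q_k}dq_k.
\]
On a critical curve one can eliminate $\der{L_{k,k+1}}{q_{k+1}}$ via $p_{k+1}=p_{k+1}^-$ and $\der{L_{k,k+1}}{q_k}$ via $p_k=p_k^+$, collapsing the identity (up to the conventional $\tau$--normalisation of $p$) to a conformal relation of the form $\lambda_+(ds_{k+1}-p_{k+1}dq_{k+1})=\lambda_-(ds_k-p_k\,dq_k)$, with $\lambda_\pm$ the non--vanishing denominators appearing in the definitions of $p_k^\pm$. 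This is exactly the statement that the induced map pulls $\eta=ds-p\,dq$ back to a non--zero multiple of itself, so $\ker\eta$ is preserved.

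The main obstacle is bookkeeping rather than conceptual: every partial derivative of $L_{k,k+1}$ implicitly involves $s_{k+1}$, so the linear equations produced by each implicit differentiation must be carefully inverted using the non--degeneracy before they can be chained. Once one is disciplined about which variables are held fixed at each step, both parts become straightforward algebraic manipulations of the defining identities for $p_k^\pm$.
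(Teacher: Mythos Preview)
The paper does not actually prove this proposition; it simply writes ``For a proof of the above proposition we refer to~\cite{vermeeren2019contact}.'' So there is no in--paper argument to compare against, and your outline is essentially the standard one from that reference: implicit differentiation of the recursion $s_{k+1}=s_k+\tau L_{k,k+1}$ to get the discrete Euler--Lagrange condition, and the total differential of the same recursion to get the conformal pullback of $\eta$.

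Your computations check out. From $s_k=s_{k-1}+\tau L_{k-1,k}$ one indeed gets $\partial s_k/\partial q_k=\tau p_k^-$, and chaining through $s_{k+1}=s_k+\tau L_{k,k+1}$ yields exactly the displayed identity, so $\partial s_{k+1}/\partial q_k=0\Leftrightarrow p_k^-=p_k^+$ under the stated non--degeneracy. The propagation to $\partial s_N/\partial q_k=0$ via non--vanishing Jacobians is also correct.

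One point deserves a sharper statement rather than the parenthetical ``up to the conventional $\tau$--normalisation of $p$''. Substituting $\partial_{q_{k+1}}L_{k,k+1}=p_{k+1}(1-\tau\,\partial_{s_{k+1}}L_{k,k+1})$ and $\partial_{q_k}L_{k,k+1}=-p_k(1+\tau\,\partial_{s_k}L_{k,k+1})$ into your total--differential identity gives literally
\[
\bigl(1-\tau\,\partial_{s_{k+1}}L\bigr)\bigl(\d s_{k+1}-\tau\,p_{k+1}\,\d q_{k+1}\bigr)=\bigl(1+\tau\,\partial_{s_k}L\bigr)\bigl(\d s_k-\tau\,p_k\,\d q_k\bigr),
\]
so what is conformally preserved is $\d s-\tau p\,\d q$, not $\d s-p\,\d q$. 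Since $\tau$ is a fixed nonzero constant, $\ker(\d s-\tau p\,\d q)$ is of course a contact structure, and it coincides with $\ker(\d s-p\,\d q)$ after the linear change of momentum coordinate $p\mapsto \tau p$; this is the sense in which the statement holds. You should say this explicitly rather than leaving it as an aside, because as written the two kernels are distinct hyperplane fields on $(q,p,s)$--space and the reader may balk.
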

{For a proof of the above proposition we refer to~\cite{vermeeren2019contact}.}
Without loss of generality it is possible to take the discrete Lagrange function depending on only one instance of $s$: $L(k\tau,q_k,q_{k+1},s_k)$. Then the discrete generalised Euler--Lagrange equations read
\begin{align*}
    &\der{}{q_k} L(k\tau,q_k,q_{k+1},s_k) \\
    &+ \der{}{q_k} L((k-1)\tau,q_{k-1},q_k,s_{k-1}) \left( 1 + \tau \der{}{s_k} L(k\tau,q_k,q_{k+1},s_k) \right) = 0\,.
\end{align*}

\section{Integrators}%
	\label{sec:integrator}
One of the advantages of having a contact Hamiltonian structure is that  many ideas for geometric integrators for symplectic systems can be carried over with relatively small effort. A study of variational integrators in the contact case has been started recently in~\cite{vermeeren2019contact}.
Here we develop higher order variational and Hamiltonian integrators for contact systems.

\subsection{Lagrangian}%
	\label{sec:Lagrangian}

Like in the symplectic case, we can construct higher order contact variational integrators using a Galerkin discretisation. In such a discretisation, the set of curves on the time interval of one step $\cC([0,\tau],Q) = \{ q: [0,\tau] \rightarrow Q \mid q(0) = q_0, q(\tau) = q_1 \}$ is replaced by a finite--dimensional space of polynomials
\[ \cC^\ell([0,\tau],Q) = \left\{ q \in \cC([0,\tau],Q) \mid q \text{ a polynomial of degree at most } \ell \right\} . \]
To paremetrise this space we introduce $\ell+1$ control points $d_0 < d_1 < \ldots < d_\ell$, where $d_0 = 0$ and $d_\ell = 1$. If for each of these control points a value $q(\tau d_i) = q_i$ is prescribed, then the polynomial $q \in \cC^\ell([0,\tau],Q)$ is uniquely determined. We denote by $\hat{q}(\cdot ; q_0,\ldots,q_\ell,\tau)$ the polynomial thus obtained. 

Given a continuous Lagrangian $\cL$, we would like to define $s: [0,\tau] \rightarrow \R$ by specifying an initial condition $s(0) = s_0$ and setting
\[ \dot{s}(t) = \cL\!\left( t, \hat{q}(t ; q_0,\ldots,q_\ell,\tau), \dot{\hat{q}}(t ; q_0,\ldots,q_\ell,\tau), s(t) \right). \]
To approximate $s$, and in particular $s(\tau)$, we use an explicit Runge--Kutta method of order $u$ with coefficients $a_{ij}$, $b_i$ and $c_i$, i.e.\@ we calculate
\[ k_i = \sum_i \tau \cL \!\left(t_0 + \tau c_i, \hat{q}(\tau c_i ; q_0,\ldots,q_\ell,\tau), \dot{\hat{q}}(\tau c_i ; q_0,\ldots,q_\ell,\tau), s_0 + \sum_j a_{ij} k_j \right) \]
and set
\[ s_1(s_0,q_0,\ldots,q_\ell,\tau) = \sum_i b_i k_i(s_0,q_0,\ldots,q_\ell,\tau).\]
The discrete Lagrangian is defined by finding a critical value of $s_1$ and subtracting $s_0$ to match the formulation of the Herglotz variational principle:
\[L(q_0,q_\ell,s_0,\tau) = \ext_{q_1,\ldots,q_{\ell-1}} \!\left(\frac{ s_1(s_0,q_0,\ldots,q_\ell,\tau)-s_0}{\tau} \right). \]
where $\ext_{q_1,\ldots,q_{\ell-1}} $ denotes the critical value with respect to variations of $q_1,\ldots,q_{\ell-1}$.

\begin{remark}
Based on numerical evidence from the symplectic version of this construction~\cite{ober2015construction}, we expect the variational integrator defined by this Lagrangian to be of order $\min(2\ell,u)$, where $\ell$ is the degree of the polynomials and $u$ the order of the Runge--Kutta method. Hence the order of the integrator can be twice the degree of the polynomial approximation. A general proof of this fact is the topic of a future work.
\end{remark}

\begin{example}
We use second order polynomials $q\in \cC^2([0,1],Q)$ and control points $d_0 = 0$, $d_1 = \frac{1}{2}$ and $d_2 = 1$. If $q(\frac{\tau i}{2}) = q_i$ for $i \in \{0,1,2\}$, then $\hat q$ is given by
\[\hat q(\tau t) = 2 q_0 (t - \tfrac{1}{2}) (t - 1) - 4 q_1 t (t-1) + 2 q_2 t (t - \tfrac{1}{2}) \]
and its derivative by
\[ \dot{\hat q}(\tau t) = \frac{2 q_0 + 2 q_2}{\tau} (t - \tfrac{1}{2}) + \frac{2 q_0 - 4 q_1}{\tau} (t-1) + \frac{2 q_2 - 4 q_1}{\tau} t .\]
In particular we have
\begin{align*}
\dot{\hat q}(0) &= \frac{-3 q_0 + 4 q_1 - q_2}{\tau} \\
\dot{\hat q}(\tfrac{\tau}{2}) &= \frac{q_2 - q_0}{\tau} \\
\dot{\hat q}(\tau) &= \frac{q_0 - 4 q_1 + 3 q_2}{\tau}
\end{align*}

We use the classical fourth order Runge--Kutta method to calculate $s_1$, approximating $s(\tau)$ as a function of $(q_0,q_1,q_2,s_0)$:
{
\setlength{\tabcolsep}{0pt}
\renewcommand{\arraystretch}{2.2}
\[\begin{array}{r @{} >{\displaystyle}c<{,} >{\displaystyle}c<{,} >{\displaystyle}c<{,} >{\displaystyle}c @{} l}
k_1 = \tau \cL\bigg(& t_0 & q_0 & \frac{-3 q_0 + 4 q_1 - q_2}{\tau} & s_0 &\bigg) \\
k_2 = \tau \cL\bigg(& t_0 + \frac{1}{2}\tau & q_1 & \frac{q_2 - q_0}{\tau} & s_0 + \frac{1}{2} k_1 &\bigg) \\
k_3 = \tau \cL\bigg(& t_0 + \frac{1}{2}\tau & q_1 & \frac{q_2 - q_0}{\tau} & s_0 + \frac{1}{2} k_2 &\bigg) \\
k_4 = \tau \cL\bigg(& t_0 + \tau & q_2 & \frac{q_0 - 4 q_1 + 3 q_2}{\tau} & s_0 + k_3 &\bigg)
\end{array}\]
}
and
\[ s_1 = s_0 + \frac{k_1 + 2k_2 + 2k_3 + k_4}{6} . \]
This gives us the discrete Lagrangian
\[ L(q_0,q_2,s_0,\tau) = \ext_{q_1} \left( \frac{s_1(q_0,q_1,q_2,s_0,\tau) - s_0}{\tau} \right)  = \ext_{q_1} \left( \frac{k_1 + 2k_2 + 2k_3 + k_4}{6 \tau} \right), \]
from which a difference equation for $q$ is obtained by the discrete Herglotz variational principle (see Proposition~\ref{prop-dgEL}).
\end{example}

\subsection{Hamiltonian}%
	\label{sec:Hamiltonian}
Here we review the standard splitting procedure for the generation of higher order methods for separable flows and then apply it to the case of contact Hamiltonian systems in which the Hamiltonian can be split into the sum of different pieces that can be integrated exactly (see also~\cite{mclachlan2002splitting}).

\subsubsection{Splitting methods}
We begin with a brief review of the splitting method, following closely~\cite{yoshida1990construction}, {to which we refer for the proofs of Propositions~\ref{2nd}, \ref{prop:exact} and \ref{prop:approx}}. First of all, we have the following definition and a related important property.

\begin{definition}
    We say that a vector field $X$ is \emph{exactly integrable} if there exists a solution to the differential equation $\dot x=X(x),\, x(0)=x_0$, given by $x(t)=\exp(t X)x_0$, that can be explicitly written in closed form.
\end{definition}

\begin{proposition}[2nd--order integrator]\label{2nd}
    If a vector field $X(x)$ can be split as a sum
    \begin{equation}
        X(x) = \sum_{i=1}^n Y_i(x), \label{sumofvectorfield}
    \end{equation}
    where each of the vector fields $Y_i(x)$ is exactly integrable,
    then
    \begin{equation}
        S_2(\tau)=
        e^{\frac{\tau}{2} Y_1} e^{\frac{\tau}{2} Y_2} \cdots e^{{\tau} Y_n} \cdots e^{\frac{\tau}{2} Y_2} e^{\frac{\tau}{2} Y_1}
        \label{secondorderintegrator},
    \end{equation}
    is a second order integrator for the differential equation
    \begin{equation}
        \dot{x} = X(x). \label{diffeq}
    \end{equation}
\end{proposition}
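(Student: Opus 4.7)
The plan is to combine a direct Taylor expansion of the composition with the palindromic structure of $S_2(\tau)$ in~\eqref{secondorderintegrator}, and then invoke the classical principle that symmetric one--step methods automatically pick up an extra order of accuracy. I will view every flow $e^{t Y_i}$ as a formal series of differential operators acting on smooth functions, $e^{t Y_i}=\sum_{k\ge 0}\tfrac{t^k}{k!}Y_i^k$, and compare the resulting series for $S_2(\tau)$ with that of the exact flow $e^{\tau X}=\sum_{k\ge 0}\tfrac{\tau^k}{k!}X^k$, where $X=\sum_i Y_i$ by assumption.

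First I would check consistency: expanding $S_2(\tau)$ and collecting the terms linear in $\tau$, the coefficient of $\tau$ is $\tfrac{1}{2}Y_1+\tfrac{1}{2}Y_2+\cdots+Y_n+\cdots+\tfrac{1}{2}Y_2+\tfrac{1}{2}Y_1 = X$, hence $S_2(\tau)=I+\tau X+O(\tau^2)$. This already shows that $S_2$ is at least a first--order approximation of the exact flow $e^{\tau X}$.

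Next I would exploit the palindromic (time--reversal) symmetry of~\eqref{secondorderintegrator}: reading the composition backwards and replacing $\tau$ by $-\tau$ yields the inverse map, so $S_2(-\tau)\circ S_2(\tau)=I$. The key general fact is then that the local error $S_2(\tau)-e^{\tau X}$ of any symmetric one--step method contains only odd powers of $\tau$. To see this, suppose the leading error term is $c_k\tau^k+O(\tau^{k+1})$ for some smallest $k\ge 2$; substituting into $S_2(-\tau)\circ S_2(\tau)=I$ and comparing with the identity $e^{-\tau X}\circ e^{\tau X}=I$, one finds $c_k\bigl(1+(-1)^k\bigr)=0$, so $k$ must be odd. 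Combined with the first--order consistency, this forces the $\tau^2$ coefficient of the local error to vanish and yields a local error of order $O(\tau^3)$, which is precisely the definition of a second--order integrator.

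The main obstacle would be if one tried to attack the statement by computing the Baker--Campbell--Hausdorff product of the full palindromic composition directly: even for moderate $n$ the combinatorics of the nested commutators becomes rather unpleasant to bookkeep. The abstract symmetry argument above sidesteps this entirely and reduces everything to the single linear--in--$\tau$ check. For the detailed version of the symmetry lemma and the correspondence between local order $p+1$ and global order $p$, I would cite~\cite{yoshida1990construction,hairer2006geometric}.
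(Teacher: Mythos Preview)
Your argument is correct: first--order consistency plus the time--reversal symmetry $S_2(-\tau)=S_2(\tau)^{-1}$ forces the local error expansion to contain only odd powers of $\tau$, and hence the method is second order. The sketch of the parity argument is a little informal (the cleanest formulation goes through the modified vector field, whose $\tau$--expansion must be even under the substitution $\tau\mapsto -\tau$), but you explicitly defer the details to \cite{yoshida1990construction,hairer2006geometric}, which is entirely appropriate.

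As for the comparison: the paper does not give its own proof of this proposition. It simply states the result and refers to \cite{yoshida1990construction} for the argument (together with Propositions~\ref{prop:exact} and~\ref{prop:approx}). Yoshida's proof proceeds via the Baker--Campbell--Hausdorff expansion of the symmetric product, which is exactly the route you flag as combinatorially heavier and then bypass with the abstract symmetry lemma; the two arguments are standard variants of one another and lead to the same conclusion.
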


\begin{remark}
    Proposition~\ref{2nd} holds also for non--integrable vector fields $Y_i$.
    However, the requirement of exact integrability is crucial to be able to implement the corresponding integrators.
\end{remark}

Based on a repeated use of the second order integrator~\eqref{secondorderintegrator} with appropriately changed step sizes, Yoshida~\cite{yoshida1990construction} developed two different algorithms to construct integrators of any even order;
the difference between the two is that
the first one uses exact coefficients to calculate the {rescaled} time steps, while the second one uses approximated coefficients. Although the first method in principle is more accurate, the latter is sometimes preferred because
it involves fewer calculations.

We can summarise these two approaches in the following statements.
\begin{proposition}[Integrator with exact coefficients]\label{prop:exact}
    If $S_{2n}(\tau)$ is an integrator of order $2n$, then the map
    \begin{equation}
        S_{2n+2} (\tau) = S_{2n}(z_1\tau) S_{2n}(z_0\tau) S_{2n}(z_1\tau), \label{2n+2orderexact}
    \end{equation}
    with $z_0$ and $z_1$ given by
    \begin{equation}%
	\label{z0z1}
        z_0(n)=-\frac{2^{\frac{1}{2n+1}}}{2-2^{\frac{1}{2n+1}}}, \hspace{0.5cm} z_1(n)=\frac{1}{2-2^{\frac{1}{2n+1}}};
    \end{equation}
    is an integrator of order $2n+2$.
\end{proposition}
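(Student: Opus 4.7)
The plan is to use backward error analysis combined with the Baker--Campbell--Hausdorff (BCH) formula, exploiting the symmetry $S_{2n}(-\tau)\,S_{2n}(\tau) = \mathrm{Id}$ that is inherited through the recursive construction starting from the symmetric base $S_2$ of Proposition~\ref{2nd}.

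First, I would invoke backward error analysis to express the integrator as the formal time--$\tau$ flow of a modified vector field. Since $S_{2n}(\tau)$ is symmetric and of order $2n$, only odd powers of $\tau$ can appear beyond the leading term, so
\begin{equation*}
    S_{2n}(\tau) = \exp\!\bigl(\tau X + \tau^{2n+1} E_{2n+1} + \tau^{2n+3} E_{2n+3} + \cdots\bigr),
\end{equation*}
for some vector fields $E_{2k+1}$ built from nested commutators of the $Y_i$. Rescaling the step yields
\begin{equation*}
    S_{2n}(a\tau) = \exp\!\bigl(a\,\tau X + a^{2n+1} \tau^{2n+1} E_{2n+1} + O(\tau^{2n+3})\bigr).
\end{equation*}

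Next, I would apply the BCH formula to the composition $S_{2n}(z_1\tau)\,S_{2n}(z_0\tau)\,S_{2n}(z_1\tau)$. The leading contributions from the three exponents simply add, while BCH commutators between $\tau X$ and the higher--order pieces produce terms of order at least $\tau^{2n+2}$. A key observation is that the composition is again symmetric in $\tau$ (reversing time reverses the order of the three factors and flips the sign of each argument), so its modified vector field can again contain only odd powers of $\tau$; in particular, any even--order cross terms produced by BCH must cancel. Therefore
\begin{equation*}
    S_{2n+2}(\tau) = \exp\!\Bigl((2z_1 + z_0)\,\tau X + (2z_1^{2n+1} + z_0^{2n+1})\,\tau^{2n+1} E_{2n+1} + O(\tau^{2n+3})\Bigr).
\end{equation*}

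To obtain an integrator of order $2n+2$, I would then impose the two order conditions
\begin{equation*}
    2 z_1 + z_0 = 1, \qquad 2 z_1^{2n+1} + z_0^{2n+1} = 0.
\end{equation*}
The second gives $z_0 = -2^{1/(2n+1)}\,z_1$; substituting into the first yields $z_1 = 1/(2 - 2^{1/(2n+1)})$, which reproduces~\eqref{z0z1}. The remaining error is $O(\tau^{2n+3})$, so $S_{2n+2}(\tau)$ approximates the exact flow to order $2n+2$.

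The main obstacle is the bookkeeping of BCH cross terms: one must verify carefully that all contributions of order $\tau^{2n+2}$ (both genuine cross commutators and pieces of $E_{2n+1}$ picked up at the wrong order) cancel, which is where the symmetry of $S_{2n}$, inherited from the palindromic structure of~\eqref{secondorderintegrator}, plays an essential role. Once symmetry is established by induction along the Yoshida hierarchy, the rest of the argument is a direct computation in the free Lie algebra generated by the $Y_i$.
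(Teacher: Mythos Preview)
Your argument is correct and is precisely the classical Yoshida argument: exploit the time--reversal symmetry of $S_{2n}$ so that the modified vector field contains only odd powers of $\tau$, combine the three exponents via BCH, use the palindromic structure of the triple product to kill even--order cross terms, and impose the two conditions $2z_1+z_0=1$ and $2z_1^{2n+1}+z_0^{2n+1}=0$. The paper does not give its own proof of this proposition; it explicitly refers the reader to Yoshida~\cite{yoshida1990construction}, whose reasoning you have faithfully reproduced.
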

\begin{proposition}[Integrator with approximated coefficients]\label{prop:approx}
    There exist $m\in\mathbb{N}$ and a set of real coefficients $\{w_j\}^m_{j=0}$ such that the map
    \begin{equation}
        S^{(m)}(\tau)=S_2(w_m\tau) S_2(w_{m-1}\tau) \cdots S_2(w_{0}\tau) \cdots S_2(w_{m-1}\tau) S_2(w_m\tau), \label{msymm}
    \end{equation}
    is an integrator of order $2n$.
\end{proposition}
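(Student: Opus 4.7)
The plan is to reduce the statement to the solvability of a finite system of polynomial equations in the coefficients $w_0,\dots,w_m$, and then to exhibit an explicit real solution of this system by invoking the exact-coefficient construction of Proposition~\ref{prop:exact}.

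First, I would use the Baker--Campbell--Hausdorff (BCH) formula together with the palindromic structure of~\eqref{secondorderintegrator} to observe that $S_2(\tau)$ is time-symmetric, $S_2(-\tau)=S_2(\tau)^{-1}$, so that all even powers of $\tau$ are forced to vanish in its generator:
\begin{equation*}
S_2(\tau)=\exp\!\left(\tau X+\sum_{k\geq 1}\tau^{2k+1}E_{2k+1}\right),
\end{equation*}
where each $E_{2k+1}$ is a fixed nested commutator in the $Y_i$'s. Applying BCH once more to the symmetric composition $S^{(m)}(\tau)$, whose palindromic shape also forces the generator to be odd in $\tau$, I obtain
\begin{equation*}
S^{(m)}(\tau)=\exp\!\left(\tau\,c(w)\,X+\sum_{k\geq 1}\tau^{2k+1}F_{2k+1}(w)\right),
\end{equation*}
with $c(w)=w_0+2\sum_{j=1}^{m}w_j$ and each $F_{2k+1}(w)$ a polynomial in $w_0,\dots,w_m$ taking values in the free Lie algebra generated by the $Y_i$'s.

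Second, demanding that $S^{(m)}$ be an integrator of order $2n$ is equivalent to $c(w)=1$ together with $F_3(w)=\dots=F_{2n-1}(w)=0$; projecting each $F_{2k+1}$ onto a basis of independent Lie monomials in the $E_j$'s and $X$ turns this into a finite system of polynomial order conditions in $w_0,\dots,w_m$. To prove that this system admits a real solution for some $m\in\N$, I would iterate the triple-jump construction of Proposition~\ref{prop:exact}: starting from $S_2$ and applying~\eqref{2n+2orderexact} recursively $n-1$ times produces an order-$2n$ integrator that is itself a palindromic composition of $3^{n-1}$ copies of $S_2$, i.e.\ precisely of the form~\eqref{msymm} with $m=(3^{n-1}-1)/2$ and coefficients obtained as the products of the $z_0(k),z_1(k)$ arising at each iteration. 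This exhibits an explicit real pair $(m,\{w_j\})$ satisfying all the order conditions and thereby proves the proposition.

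The main obstacle, and what motivates the terminology ``approximated coefficients'', is not existence itself but the minimisation of $m$: the triple-jump solution has $m$ growing exponentially in $n$, whereas one would like the smallest admissible $m$ for efficiency. Finding such near-minimal compositions requires solving the nonlinear polynomial system directly, and for $n\geq 3$ the resulting $w_j$ are in general not expressible in closed form and must be computed numerically. A sharper existence statement of minimal $m$ for each order would require a careful count of the independent Lie monomials at each order together with a continuation argument from the (non-minimal) triple-jump solutions, but for the statement as given, the triple-jump construction already suffices.
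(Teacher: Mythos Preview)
Your proposal is correct. The paper itself does not prove this proposition but defers to Yoshida~\cite{yoshida1990construction}, noting only that ``the proof is constructive and the coefficients are obtained as approximated solutions to an algebraic equation derived from the Baker--Campbell--Hausdorff formula''. Your first two steps---using time-symmetry to kill the even powers in the generator of $S_2$, then applying BCH to the palindromic composition and reducing the order-$2n$ requirement to a finite polynomial system in the $w_j$---are precisely Yoshida's derivation of the order conditions.

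Where you depart from the cited argument is in the existence step. Yoshida establishes existence \emph{a posteriori} by numerically solving the order conditions for small $m$ (producing, for instance, the coefficients in Table~\ref{numsol}); this is what the paper means by ``approximated coefficients''. You instead observe that the $(n-1)$-fold iterate of the triple-jump of Proposition~\ref{prop:exact} is already a palindromic composition of $3^{n-1}$ copies of $S_2$, hence automatically of the form~\eqref{msymm} with $m=(3^{n-1}-1)/2$, and so a real solution of the order conditions exists for free. This is a cleaner and fully self-contained existence argument, and you are right to flag that it does not address what the proposition is really \emph{for}, namely obtaining $m$ much smaller than the exponential triple-jump value. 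For the bare statement as written, however, your route suffices and is arguably more transparent than appealing to the numerical solvability of the polynomial system.
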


The proof of Proposition~\ref{prop:approx} is constructive~\cite{yoshida1990construction}, and the coefficients are obtained as approximated solutions to an algebraic equation derived from the Baker--Campbell--Hausdorff formula
(see also equation~\eqref{eq:BCH} below).
Table~\ref{numsol} lists values of the coefficients $\{w_j\}^m_{j=0}$ for three $6^{th}$--order integrators, labeled A, B and C. Note that $w_0 := 1 - 2\sum_{j=1}^m w_i$.
\begin{table}[h!]
    \centering
    \begin{tabular}{cccc}
        \toprule
        & A & B & C \\
        \midrule
        $w_0$ & $1.315186320683906$& $2.37635274430774$
        & $2.3894477832436816$
        \\
        \midrule
        $w_1$ & $-1.17767998417887$ & $-2.13228522200144$ & $0.00152886228424922$ \\
        $w_2$ & $0.235573213359357$ & $0.00426068187079180$ & $-2.14403531630539$  \\
        $w_3$ & $0.784513610477560$ & $1.43984816797678$ & $1.44778256239930$ \\
        \bottomrule
    \end{tabular}
        \caption{The coefficients $w_{i}$ for three $6^{th}$--order integrators.}%
	\label{numsol}
\end{table}

\subsubsection{Contact integrators of order \texorpdfstring{$2n$}{2n}}
Let us now apply the above splitting schemes to derive contact integrators, i.e.~integrators that preserve the contact structure.
The time evolution in this case is given by a contact Hamiltonian vector field, and thus the flow is a contact map, as explained in Section~\ref{sec:contact}.
As a direct consequence of the splitting method, we have the following result.
\begin{proposition}%
	\label{prop:2ndcontact}
    Let the contact Hamiltonian be separable into the sum of functions
    \begin{equation}
        \mathcal{H}(q^i,p_i,s) = \sum_{j=1}^n \phi_j(q^i,p_i,s), \label{hamsum}
    \end{equation}
    such that each of the
    vector fields $X_{\phi_j}$
    is exactly integrable.
    Then the integrator
    \begin{equation}
        S_{2}(\tau)=e^{\frac{\tau}{2} X_{\phi_1}} e^{\frac{\tau}{2} X_{\phi_2}} \cdots e^{{\tau} X_{\phi_n}} \cdots e^{\frac{\tau}{2} X_{\phi_2}} e^{\frac{\tau}{2} X_{\phi_1}} \label{secondinteghamil}
    \end{equation}
    is a second order contact integrator.
\end{proposition}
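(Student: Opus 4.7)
The plan is to reduce the statement to Proposition~\ref{2nd} together with the basic fact that the flow of a contact Hamiltonian vector field is a contactomorphism. There are essentially two things to verify: first, that $S_2(\tau)$ approximates the flow of $X_{\mathcal{H}}$ to second order; second, that $S_2(\tau)$ preserves the contact structure.

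For the first point, I would observe that the association $\phi \mapsto X_\phi$ established in Proposition~\ref{prop:isomorphism} is an isomorphism of Lie algebras and in particular $\R$--linear, so that the splitting $\mathcal{H} = \sum_{j=1}^n \phi_j$ induces the splitting of vector fields
\begin{equation*}
X_{\mathcal{H}} = \sum_{j=1}^n X_{\phi_j}.
\end{equation*}
Linearity can be read off directly from the defining equations~\eqref{XCH}: both $\mathcal{L}_X \eta = \rho\,\eta$ and $\iota_X \eta = -\mathcal{H}$ are linear in $X$, and the coordinate expression~\eqref{CXH} makes this explicit as well. Since each $X_{\phi_j}$ is exactly integrable by hypothesis, the compositions $e^{\frac{\tau}{2} X_{\phi_1}} \cdots e^{\tau X_{\phi_n}} \cdots e^{\frac{\tau}{2} X_{\phi_1}}$ are well defined maps on $M$, and Proposition~\ref{2nd} applied to $X = X_{\mathcal{H}}$ and $Y_j = X_{\phi_j}$ immediately gives that $S_2(\tau)$ is a second order integrator for $\dot x = X_{\mathcal{H}}(x)$.

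For the second point, I would argue that each factor $e^{t X_{\phi_j}}$ is a contactomorphism: indeed, by definition $X_{\phi_j}$ is a contact vector field, i.e.\@ $\mathcal{L}_{X_{\phi_j}} \eta = \rho_{\phi_j}\,\eta$, and integrating this relation along the flow shows that $(e^{t X_{\phi_j}})^* \eta = \lambda_j(t)\,\eta$ for some non--vanishing function $\lambda_j(t)$, which is exactly the contactomorphism condition from Definition~\ref{contactomorphism}. The set of contactomorphisms with respect to a fixed representative $\eta$ is closed under composition (if $f^*\eta = \rho_f \eta$ and $g^*\eta = \rho_g \eta$, then $(g\circ f)^*\eta = f^*(\rho_g \eta) = (\rho_g \circ f)\rho_f\, \eta$), hence the symmetric product defining $S_2(\tau)$ is itself a contactomorphism.

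Combining the two facts, $S_2(\tau)$ is a map that both approximates the exact contact Hamiltonian flow to second order and preserves the contact structure, so it is a second order contact integrator, as claimed. The argument is essentially bookkeeping on top of already--stated results; the only place where one has to be a little careful is in verifying that flows of contact Hamiltonian vector fields actually give contactomorphisms with a well--behaved conformal factor, but this is a direct consequence of the defining equation $\mathcal{L}_{X_{\phi_j}} \eta = \rho_{\phi_j} \eta$ and does not present a genuine obstacle.
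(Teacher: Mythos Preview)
Your proof is correct and follows the same approach as the paper: reduce the second-order accuracy to Proposition~\ref{2nd} via the linearity $X_{\mathcal{H}} = \sum_j X_{\phi_j}$, and obtain the contact property from the fact that each $e^{t X_{\phi_j}}$ is the flow of a contact vector field and that contactomorphisms are closed under composition. The paper's version is more terse, but the logical structure is identical.
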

\begin{proof}
    The fact that~\eqref{secondinteghamil} is a second order integrator follows directly from Proposition~\ref{2nd}.
    Moreover, each map $e^{\frac{\tau}{2} X_{\phi_j}}$
    is a contact map because by definition it is the flow of a contact Hamiltonian vector field. Being the composition of contact maps, $S_2(\tau)$ is a contact transformation itself.
\end{proof}

\begin{corollary}
We can construct contact integrators of any even order.
\end{corollary}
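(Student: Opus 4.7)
The plan is to argue by induction on the order $2n$, combining Proposition~\ref{prop:2ndcontact} for the base case with Yoshida's recursive construction from Proposition~\ref{prop:exact} for the inductive step. The essential ingredient that needs to be checked at each stage is that the resulting map is not merely an integrator of the required order, but also a contactomorphism.

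For the base case, Proposition~\ref{prop:2ndcontact} already produces a second order integrator $S_2(\tau)$ whose contact character follows from the fact that each factor $e^{(\tau/2) X_{\phi_j}}$ is the time--$\tau/2$ flow of a contact Hamiltonian vector field and is therefore a contactomorphism by Definition~\ref{contactomorphism}, while contactomorphisms are closed under composition (the pullback of $\eta$ by a composition rescales $\eta$ by the product of the two conformal factors $\rho_f$). For the inductive step, suppose $S_{2n}(\tau)$ is a contact integrator of order $2n$. Then by Proposition~\ref{prop:exact} the map
\[ S_{2n+2}(\tau) = S_{2n}(z_1\tau)\, S_{2n}(z_0 \tau)\, S_{2n}(z_1\tau), \]
with $z_0, z_1$ as in~\eqref{z0z1}, is an integrator of order $2n+2$. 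Each of the three factors is obtained from $S_{2n}$ by evaluating it at a rescaled time step; since $S_{2n}$ is a contactomorphism for every positive argument by the induction hypothesis, and the definition of contactomorphism makes no reference to the size or sign of the time step (it only requires the pullback to rescale $\eta$), each factor of $S_{2n+2}(\tau)$ is a contactomorphism, and so is their composition. This completes the induction and yields a contact integrator of every even order.

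The only point that needs a moment of care is the treatment of the negative coefficient $z_0 < 0$ appearing in Yoshida's scheme. However, reversing the sign of the time step in the flow of a contact Hamiltonian vector field produces the inverse flow, which is again a contactomorphism (contactomorphisms form a group under composition), so this causes no obstruction. Alternatively, one can use the approximated--coefficient construction of Proposition~\ref{prop:approx}, where $S^{(m)}(\tau)$ is again a composition of $S_2$ applied at rescaled times, and the same argument applies verbatim. I expect no serious obstacle; the statement is essentially a bookkeeping consequence of the fact that the class of contact maps is closed under composition and time rescaling, coupled with Yoshida's order--lifting recipe.
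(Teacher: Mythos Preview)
Your proof is correct and follows essentially the same approach as the paper, which simply states that the construction is obtained by combining Proposition~\ref{prop:2ndcontact} with either Proposition~\ref{prop:exact} or Proposition~\ref{prop:approx}. You have merely made explicit the inductive structure and the closure of contactomorphisms under composition that the paper leaves implicit.
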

\begin{proof}
Such a construction is obtained by combining Proposition~\ref{prop:2ndcontact} with either Proposition~\ref{prop:exact} or Proposition~\ref{prop:approx}.
\end{proof}

\begin{remark}
The general question of finding all contact Hamiltonian systems admitting a splitting into exactly integrable pieces has been addressed in~\cite{mclachlan2002splitting}.
\end{remark}

\subsubsection{Modified Hamiltonian and error estimation}%
	\label{sec:modifiedhamiltonian}
One of the most powerful techniques to study the long--time behaviour of symplectic or contact integrators is the so--called \emph{backward error analysis}, that is, the study of modified differential equations that are exactly traced by the discrete maps of the integrator.

Propositions~\ref{prop:isomorphism} and~\ref{prop:2ndcontact} suggest that any contact integrator has an associated \emph{modified Hamiltonian}, meaning that the numeric integration follows exactly the flow of a different contact Hamiltonian.
Below we show the construction for a second order integrator for a time--dependent Hamiltonian of the type~\eqref{eq:mainCH}. 

We consider $\mathcal{H} = A + B + C$, then according to Proposition~\ref{prop:2ndcontact}, we have the second order integrator
\begin{align}
S_{2} (\tau) = &\exp\left(\frac{\tau}{2} \frac{\partial}{\partial t} \right) \exp\left(\frac{\tau}{2} X_C \right)\exp\left(\frac{\tau}{2} X_B \right) \notag \\
& \times \exp\left({\tau} X_A \right) \exp\left(\frac{\tau}{2} X_B \right)\exp\left(\frac{\tau}{2} X_C \right)\exp\left(\frac{\tau}{2} \frac{\partial}{\partial t} \right)\,, \label{secondintegrator}
\end{align}
where we stress that the {first and last} terms,  $\exp\left(\frac{\tau}{2} \frac{\partial}{\partial t} \right)$,  are needed only in the case of non--autonomous Hamiltonians.

The Baker--Campbell--Hausdorff (BCH) formula provides a closed expression to compute the product of exponentials of any two non--commutative operators $X$ and $Y$ in the Lie algebra of a Lie group~\cite{varadarajan1974lie}. More precisely, let $Z(X,Y)$ be the solution to $e^X e^Y = e^Z$, then 
\begin{equation}%
	\label{eq:BCH}
    Z(X,Y) = X + Y + \frac{1}{2} \left[X,Y\right] + \frac{1}{12} \left[ X, \left[X,Y\right]\right] +\frac{1}{12} \left[ Y, \left[Y,X\right]\right] + \ldots
\end{equation}
where ``\ldots'' indicates terms involving higher commutators of $X$ and $Y$.

Applying~\eqref{eq:BCH} and the property that $[X_f,X_g]=-X_{\jacobi{f,g}}$ to the product of exponentials in~\eqref{secondintegrator}, 
we obtain, up to fourth order in $\tau$:
\begin{align}
\exp\left(\frac{\tau}{2} \frac{\partial}{\partial t} \right) \exp\bigg(
    & (X_A + X_B + X_C)\tau \notag\\
    & + \frac{\tau^3}{12} \Big(X_{\jacobi{A,\jacobi{A,B}}} +X_{\jacobi{A,\jacobi{A,C}}} + X_{\jacobi{B,\jacobi{B,C}}} \notag\\
    &\qquad\quad
        + X_{\jacobi{A,\jacobi{B,C}}}
        +X_{\jacobi{B,\jacobi{A ,C}}}
    \Big) \notag\\
    & -\frac{\tau^3}{24} \Big(X_{\jacobi{B,\jacobi{B,A}}} + X_{\jacobi{C,\jacobi{C,A}}}+X_{\jacobi{C,\jacobi{C,B}}} \Big) \notag \\
    &+ \mathcal{O}(\tau^4) \bigg)\exp\left(\frac{\tau}{2} \frac{\partial}{\partial t} \right). \label{step1err}
\end{align}
Using the property that $X_{f+g} = X_f + X_g$
and the time reversibility,
we see that~\eqref{step1err} can be reduced to
\begin{equation}
\exp\left(\frac{\tau}{2} \frac{\partial}{\partial t} \right) \exp\bigg(\tau \left(X_{\mathcal{H} + \tau^2 \Delta \mathcal{H}} + \mathcal{O}(\tau^4) \right) \bigg)\exp\left(\frac{\tau}{2} \frac{\partial}{\partial t} \right), \label{timeind}
\end{equation}
where
\begin{align}
\Delta \mathcal{H} := \frac{1}{12} \bigg(&{\jacobi{A,\jacobi{A,B}}} +{\jacobi{A,\jacobi{A,C}}} + {\jacobi{B,\jacobi{B,C}}} \notag \\
&+{\jacobi{A,\jacobi{B,C}}}
    +{\jacobi{B,\jacobi{A,C}}}
    \notag \\
&-\frac{1}{2}{\jacobi{B,\jacobi{B,A}}} -\frac{1}{2}{\jacobi{C,\jacobi{C,A}}} - \frac{1}{2}{\jacobi{C,\jacobi{C,B}}} \bigg)
\end{align}
is the correction to the original Hamiltonian up to order two for the autonomous case.

Now we want to compute the modified Hamiltonian for a time--dependent system. The previous steps continue to hold, but we further need to include the time dependence applying again the BCH formula in~\eqref{timeind}.
Using the property
\begin{equation}
    	\left[\frac{\partial}{\partial t}, X_{\mathcal{H}}\right] =
    	X_{\frac{\partial \cH}{\partial t}}\,,
\end{equation}
we find that the
modified Hamiltonian $\tilde{\mathcal{H}}'$ is given by
\begin{equation}
	\tilde{\mathcal{H}}' := \mathcal{H} + \tau^2\underbrace{\left[ \Delta \mathcal{H} + \frac{1}{12} \jacobilr{\mathcal{H}, \frac{\partial \mathcal{H}}{\partial t}} - \frac{1}{24} \frac{\partial^2 \mathcal{H}}{\partial t^2}\right]}_{\Delta \cH ' }. \label{eq:modifiedhamiltonian}
\end{equation}

For Hamiltonians of the type~\eqref{eq:mainCH}, taking $A=f(t)s$, $B=V(q,t)$ and $C=\frac{p^2}{2}$, we obtain the explicit form:
\begin{align}
    \Delta \cH' =
        - \frac{1}{12} \bigg(
            &\frac{\partial f(t)}{\partial t} \left(\frac{p^2}{2} - V(q,t)\right)
            -f(t)^2 \left(\frac{p^2}{2} + V(q,t)\right) \notag\\
            &+f(t) \left(\frac{\partial V(q,t)}{\partial t}-p \frac{\partial V(q,t)}{\partial q}\right)
            +p \left(\frac{p}{2} \frac{\partial^2 V(q,t)}{\partial q^2} + \frac{\partial^2 V(q,t)}{\partial q \partial t}\right)\notag\\
            &-\left(\frac{\partial V(q,t)}{\partial q}\right)^2
            + \frac 12 \frac{\partial^2 V(q,t)}{\partial t^2}
            + \frac s2 \frac{\partial^2 f(t)}{\partial t^2}
        \bigg)\label{modHamiltonian}.
\end{align}

\begin{remark}%
	\label{Rem:singular1}
Note that~\eqref{eq:modifiedhamiltonian} is a truncation after the second order in $\tau$ of the modified Hamiltonian, which is an asymptotic series. This is important to keep in mind, especially if some of the terms in~\eqref{modHamiltonian} contain a singularity (i.e.\@ negative order terms) in $t$. In that case the overall result is not a $\tau^2$ term, because in the first few steps we have $t \approx \tau$, so the singularity in $t$ leads to an order reduction in $\tau$. Similar order reductions will take place in the higher order terms of the modified Hamiltonian, which we have not written explicitly here. We will see an example of this  in Section~\ref{sec:laneemden}. 
\end{remark}

The modified equation is the formal differential equation
\begin{align}
\dot{f}(q,p,s) &= X_{\tilde{\cH}'} f(q,p,s) + \cO(\tau^3 ) \notag\\
 &= X_\cH f(q,p,s) + \tau^2 X_{\Delta \cH'} f(q,p,s) + \cO(\tau^3 )\label{serieserror}
\end{align}
generated by the modified Hamiltonian, where $f$ is any smooth function of the dynamical variables. It has the property that solutions to the modified equation, truncated at a certain order in $\tau$, interpolate discrete solutions up to an error of the same order in $\tau$.

\begin{proposition}%
	\label{errorpropagationprop}
	If the Hamiltonian does not contain any singularities in the time variable, then the integrator is of second order and the local error is 
	\begin{equation}%
	\label{eq:Prop10error}
	S_2(\tau)(q,p,s) - \varphi_\tau(q,p,s)
	=  \tau^3 X_{\Delta \cH'} (q,p,s) + \cO(\tau^4) ,
	\end{equation}
	where $\varphi_\tau(q_0,p_0,s_0)$ denotes the exact flow after time $\tau$ and $S_2(\tau)$ the numerical integrator.
\end{proposition}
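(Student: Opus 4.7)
The plan is to read the preceding derivation of the modified Hamiltonian as saying precisely that, under the no-singularity hypothesis, the integrator $S_2(\tau)$ agrees with the exact flow of $\tilde{\cH}'=\cH+\tau^2\Delta\cH'$ up to a remainder of order $\tau^5$, and then to compare this modified flow with the exact flow $\varphi_\tau=\exp(\tau X_\cH)$ by a straightforward Taylor expansion in $\tau$.

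First I would record carefully what the BCH computation from Section~\ref{sec:modifiedhamiltonian} actually proves. Starting from~\eqref{secondintegrator} and applying~\eqref{eq:BCH} together with the identities $X_{f+g}=X_f+X_g$ and $[X_f,X_g]=-X_{\jacobi{f,g}}$ from Proposition~\ref{prop:isomorphism}, and then absorbing the time-shift factors via $[\partial_t,X_\cH]=X_{\partial_t\cH}$, one arrives at
\[
S_2(\tau) = \exp\!\bigl(\tau X_{\tilde{\cH}'}\bigr) + R(\tau),
\]
where $\tilde{\cH}'=\cH+\tau^2\Delta\cH'$ as in~\eqref{eq:modifiedhamiltonian} and $R(\tau)$ collects all BCH terms of order $\tau^4$ or higher inside the exponential. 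The no-singularity assumption enters here: it guarantees that each nested Jacobi bracket of $A$, $B$, $C$ with time derivatives is bounded on the relevant compact set, so the asymptotic BCH series genuinely gives $R(\tau)=\mathcal{O}(\tau^5)$ as $\tau\to 0$ when applied to the coordinate functions (cf.\ Remark~\ref{Rem:singular1}, whose warning about order reduction is precisely what this hypothesis rules out).

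Next I would exploit the linearity of the correspondence $f\mapsto X_f$ to write
\[
X_{\tilde{\cH}'} = X_\cH + \tau^2 X_{\Delta\cH'},
\]
and Taylor-expand both $\exp(\tau X_{\tilde{\cH}'})$ and $\exp(\tau X_\cH)$ acting on the identity map (equivalently, on the coordinate functions $q$, $p$, $s$) around $\tau=0$. The $\tau^0, \tau^1, \tau^2$ contributions coming from $X_\cH$ cancel in the difference; the first surviving term is $\tau\cdot\tau^2 X_{\Delta\cH'}(q,p,s)=\tau^3 X_{\Delta\cH'}(q,p,s)$, arising from the linear-in-$\tau$ piece of $\exp(\tau X_{\tilde{\cH}'})$ acting through the $\tau^2 X_{\Delta\cH'}$ correction. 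Cross terms such as $\tfrac{\tau^2}{2}(X_\cH\circ\tau^2 X_{\Delta\cH'}+\tau^2 X_{\Delta\cH'}\circ X_\cH)$ and the quadratic-in-$\Delta\cH'$ contribution are all of order $\tau^4$ or higher, and the BCH remainder $R(\tau)$ is of order $\tau^5$. Combining yields exactly~\eqref{eq:Prop10error}, and in particular the second-order accuracy of $S_2$ follows as a corollary.

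I expect the main obstacle to be the rigorous handling of the BCH step rather than the final Taylor comparison. The BCH formula is only an asymptotic expansion, and the presence of the explicit time-shift operators $\exp(\tfrac{\tau}{2}\partial_t)$ means that one must interleave commutators of $\partial_t$ with commutators of $X_A,X_B,X_C$; the no-singularity assumption is used precisely to ensure that every such nested commutator, when evaluated on the coordinate functions along the trajectory up to time $\tau$, remains uniformly bounded, so that the truncation error at each order is honestly controlled by a power of $\tau$. Everything else is algebra, and the concluding step—reading off the leading-order discrepancy between $\exp(\tau X_\cH)$ and $\exp(\tau X_{\cH+\tau^2\Delta\cH'})$—is essentially immediate.
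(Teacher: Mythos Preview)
Your proposal is correct and follows essentially the same approach as the paper: use the BCH-derived identification $S_2(\tau)=\exp(\tau X_{\tilde{\cH}'})$ up to higher order in $\tau$, then compare with the exact flow $\varphi_\tau=\exp(\tau X_\cH)$ by expanding in $\tau$ and isolating the leading $\tau^3 X_{\Delta\cH'}$ term. Your write-up is in fact considerably more explicit than the paper's two-line proof, which condenses the same reasoning into a single (somewhat informal) display.
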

\begin{proof}
	We have that
\begin{align*}
S_2(\tau)(q,p,s) - \varphi_\tau(q,p,s)
&= \exp \left( \tau X_{\tilde{\cH}' + \cO(\tau^3) } (q,p,s)- \tau X_{\cH} (q,p,s) \right) \\
&= \tau^3 X_{\Delta \cH'} (q,p,s) + \cO(\tau^4). 
\qedhere
\end{align*}
\end{proof}

In particular, if $(q,p,s)$ are Darboux coordinates, we find as local error estimates in each of the coordinates:
\begin{align}
\Delta q^a &=
\tau^3 | X_{\Delta \cH'} q^a| = \tau^3 \left|\frac{\partial \Delta \cH'}{\partial p_a}\right| \,, \label{errqstep1}\\
\Delta p_a &=
\tau^3 | X_{\Delta \cH'} p_a| = \tau^3\left|-\frac{\partial \Delta \cH'}{\partial q^a} - p_a\frac{\partial \Delta \cH'}{\partial s}\right|\,, \label{errpstep1}\\
\Delta s &= 
\tau^3 | X_{\Delta \cH'} s|\, = \tau^3\left| \frac{\partial \Delta \cH'}{\partial p_a}p_a - \Delta \cH'\right|. \label{errsstep1}
\end{align}
\noeqref{errqstep1}\noeqref{errpstep1}\noeqref{errsstep1}
Consider a numerical solution $(q_j,p_j,s_j) = S(\tau)^j (q_0,p_0,s_0)$ and an exact solution $(q(t),p(t),s(t)) = \varphi_{t}(q_0,p_0,s_0)$ with the same initial data. We can estimate an upper bound for the error $\Delta_j =
\left\| (q_j,p_j,s_j) - (q(j\tau),p(j\tau),s(j\tau)) \right\|$ after $j$ steps by
\begin{align*}
\Delta_{j+1}
&= \left\| (q_{j+1},p_{j+1},s_{j+1}) - (q((j+1)\tau),p((j+1)\tau),s((j+1)\tau)) \right\| \\
&= \left\| S(\tau)(q_j,p_j,s_j) - \varphi_{\tau} ( (q(j\tau),p(j\tau),s(j\tau)) ) \right\|  \\
&\leq  \left\| S(\tau)(q_j,p_j,s_j) - \varphi_{\tau} (q_j,p_j,s_j) \right\| \\
    &\quad+ \left\| \varphi_{\tau} (q_j,p_j,s_j) - \varphi_{\tau} ( (q(j\tau),p(j\tau),s(j\tau)) ) \right\| \\
&= \tau^3 \left\| X_{\Delta \cH'} (q_j,p_j,s_j) \right\| + \cO(\tau^4) + \left\| \nabla \varphi_{\tau} \Delta_j \right\| + \cO(\Delta_j^2),
\end{align*}
where $ \nabla \varphi_{\tau} = I + \cO(\tau)$ because any integrator is close to the identity map. Hence as long as the error is small, $\Delta_j = \cO(\tau^3)$, we have
\[ \Delta_{j+1} \leq \Delta_j + \tau^3 \left\| X_{\Delta \cH'} (q_j,p_j,s_j) \right\| + \cO(\tau^4) \]
which gives us an estimate for the error after $N$ steps:
\[ \Delta_N \lesssim \sum_{j=0}^{N-1} \tau^3 \left\| X_{\Delta \cH'} (q_j,p_j,s_j) \right\|. \]

\begin{remark}%
	\label{rmk:errorsize}
	In the proof above we obtain an upper bound for the numerical error in an asymptotic sense. 
	For relatively large $\tau$ the $\cO(\tau^4)$ term will not be negligible. In addition, after several integration steps, $\Delta_j$ will likely be too large, violating the assumption that $\Delta_j = \cO(\tau^3)$.
	This can be seen in a few instance of the examples below.
\end{remark}
\begin{example}
	The contact Hamiltonian of a damped harmonic oscillator is given by~\cite{bravetti2017contact}
	\begin{equation}
		\cH = \underbrace{\frac{1}{2} p^2}_{C} + \underbrace{\frac{1}{2} q^2}_{B} + \underbrace{\alpha s}_{A},\quad \alpha\in\mathbb{R}. \label{dampedharmonic}
	\end{equation}
	From equation~\eqref{eq:modifiedhamiltonian} it follows readily that
	\begin{equation}
    \tilde{\cH}' = \frac{1}{2} p^2 + \frac{1}{2} q^2+ \alpha s -
    \frac{\tau^2}{24} \bigg(2\alpha p q + p^2(\alpha^2-1) + q^2 (\alpha^2 + 2)\bigg)
	\end{equation}
	In this case Proposition~\ref{errorpropagationprop} clearly holds, and it implies that the errors in the kinematic quantities at each step
    are
	\begin{align}
		\Delta \dot{q}_i &= \frac{\tau^2}{12} \left|\alpha q_{i-1} + p_{i-1}(\alpha^2-1) \right| \label{harmonicerrorq} \\
		\Delta \dot{p}_i &= \frac{\tau^2}{12} \left|q_{i-1}(\alpha^2 + 2) + \alpha p_{i-1}\right| \label{harmonicerrorp}\\
		\Delta \dot{s}_i &= \frac{\tau^2}{24} \left|p_{i-1}^2 (\alpha^2-1)-q_{i-1}^2 (\alpha^2 + 2)\right|.\label{harmonicerrors}
	\end{align}
	\noeqref{harmonicerrorp}\noeqref{harmonicerrorq}\noeqref{harmonicerrors}
	For $\tau$ small enough,
	 we obtain from~\eqref{harmonicerrorq} that
	\begin{equation}%
	\label{eq:errorDamped}
		\Delta q_{i+1} \simeq \Delta q_{i} + \frac{\tau^3}{12} \left| \alpha q_{i-1} + p_{i-1}(\alpha^2-1) \right|.
	\end{equation}
	In this example we can compute the exact solution for $q(t)$ and compare the numerical error at each step, labelled \emph{Numerical}, 
	with the estimate provided by equation~\eqref{eq:errorDamped} using the modified Hamiltonian, labelled \emph{Estimated} 
	(see Figures~\ref{fig:errorplotharmonic05} and~\ref{fig:errorplotharmonic0001}).

	\begin{figure}[ht!]
        \centering
		\includegraphics[width=\onelinewidth, trim=0 0 0 35, clip]{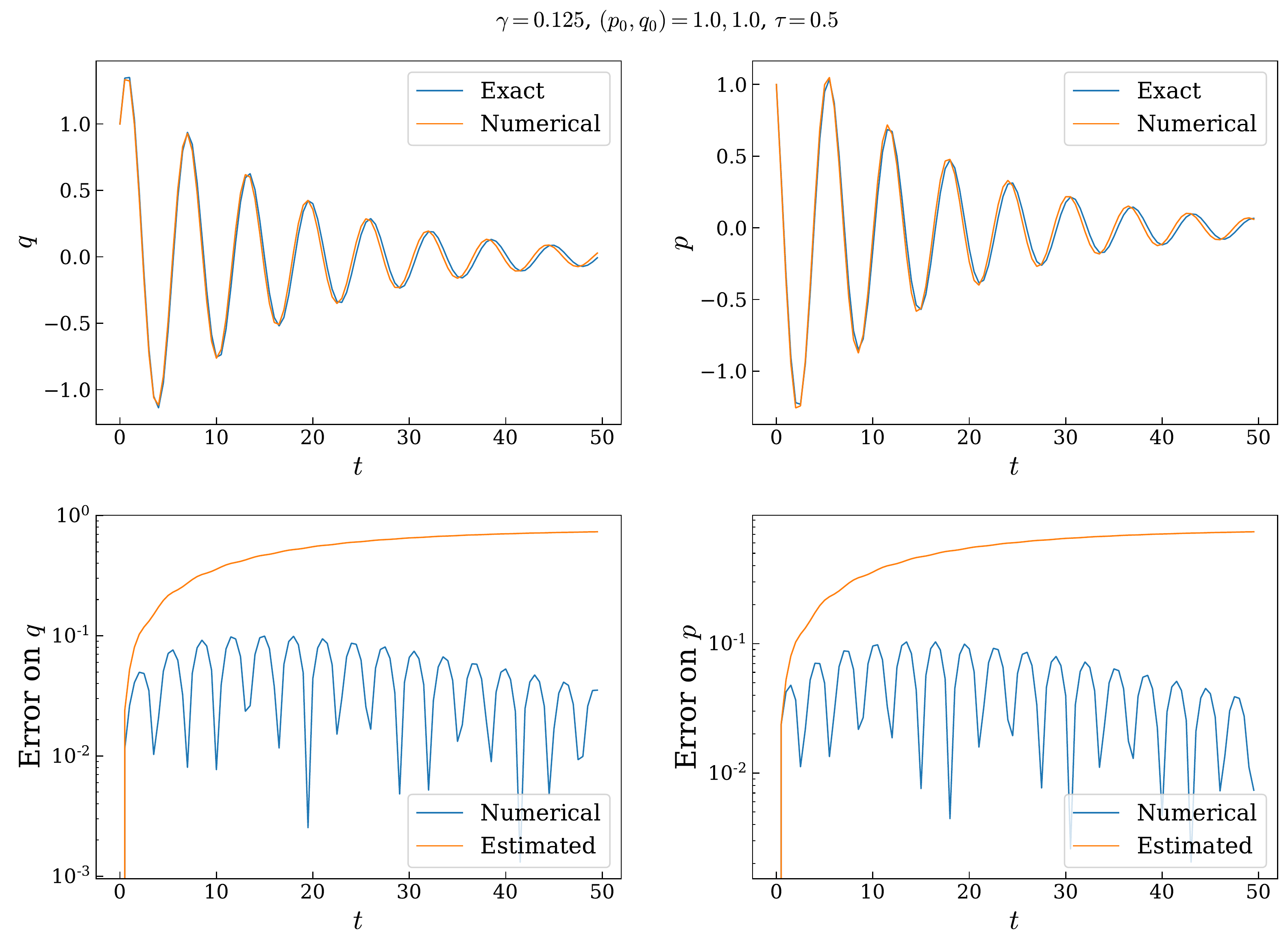}
		\captionsetup{font=footnotesize}
		\captionsetup{width=\onelinewidth}
		\caption{Numerical error and error estimate using the modified Hamiltonian for the damped harmonic oscillator \eqref{dampedharmonic} with coupling parameter $\alpha=0.125$ and time step $\tau = 0.5$.}%
	\label{fig:errorplotharmonic05}
	\end{figure}
    \begin{figure}[ht!]
        \centering
		\includegraphics[width=\onelinewidth, trim=0 0 0 35, clip]{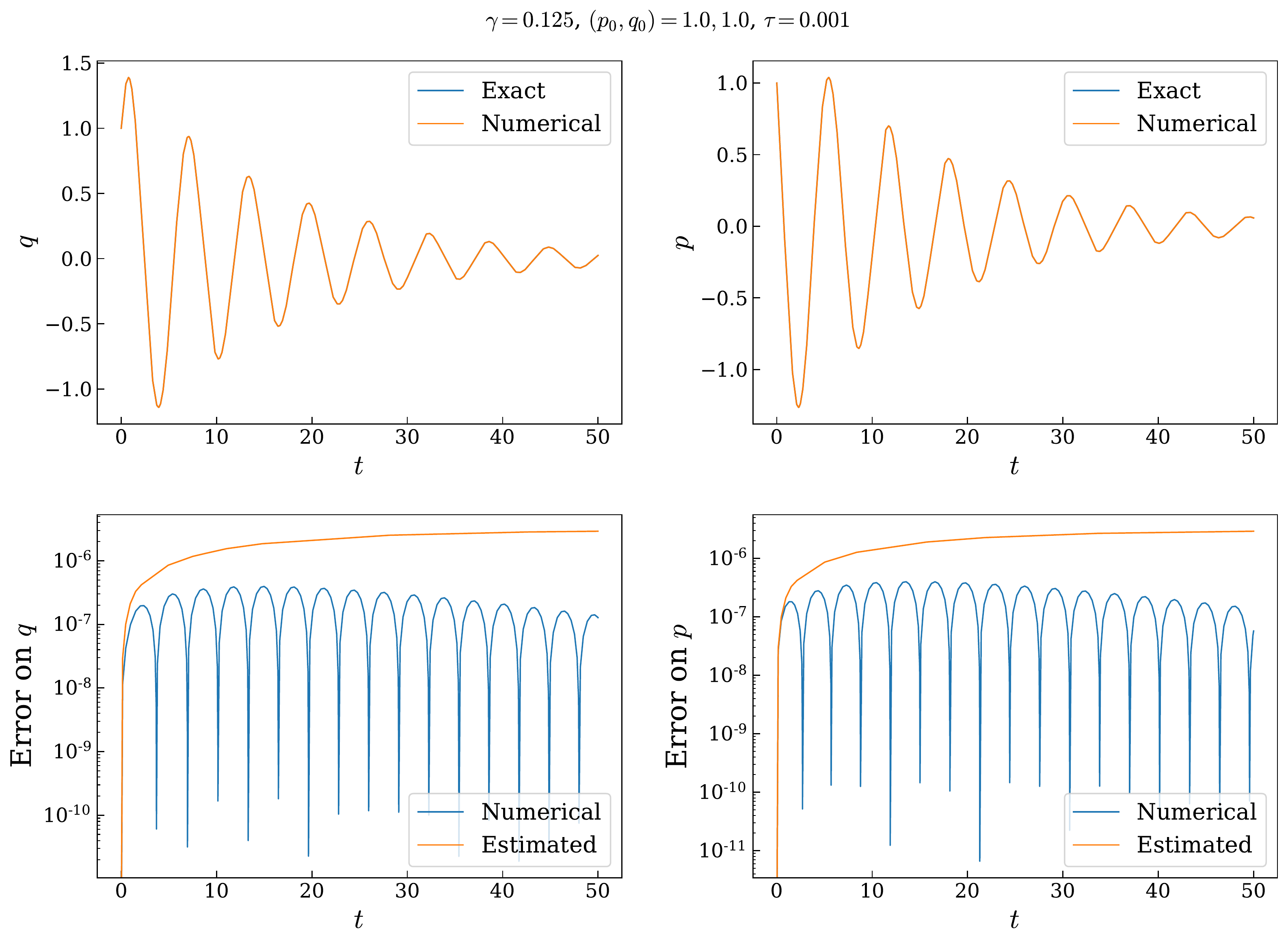}
		\captionsetup{font=footnotesize}
		\captionsetup{width=\onelinewidth}
		\caption{Numerical error and error estimate using the modified Hamiltonian for the damped harmonic oscillator \eqref{dampedharmonic} with coupling parameter $\alpha=0.125$ with time step $\tau = 0.001$.}%
	\label{fig:errorplotharmonic0001}
	\end{figure}
\end{example}

\section{Contact integrators in celestial mechanics}%
	\label{sec:examples}
\subsection{The perturbed Kepler problem}

The most general form of a perturbed Kepler problem is~\cite{fonda2012periodic}
\begin{equation}
    \ddot{x} + \frac{x}{|x|^3} = \alpha F(x, \dot{x}, t; \alpha).
\end{equation}
Here, the perturbation $F(x, \dot{x}, t; \alpha)$ is usually assumed to be either of the form $F=\partial V(x, t)/\partial x$, where $V(x,t)$ is a periodic function in $t$, or $F$ may include different types of dissipation, the simplest one being a linear drag~\cite{breiter1998unified,diacu1999two,margheri2014dynamics,pub.1059182093}.  
In the case of a linear drag, the corresponding equation 
        \begin{equation}%
	\label{eq:modifiedKeplerO}
         \ddot{x} +\alpha\dot x+ \frac{x}{|x|^3} = 0.
         \end{equation}
is obviously of the form~\eqref{eq:geneq}, with 
     \begin{equation}%
	\label{eq:cHKeplerO}
        \cH=\frac{|p|^2}{2}-\frac{1}{|x|}+\alpha\,s\,.
    \end{equation}
We refer to~\cite{margheri2014dynamics} for a detailed analysis of the dynamics in this case.
Here, to show the usefulness of our integrators, we 
study the slightly more general case of a linear drag that also depends explicitly on time.
To the best of our knowledge, this case has not been addressed before.
The goal in this section
is not to give a complete study of the orbits of the system, but to compare the behavior of contact integrators with respect to standard (fixed--step Runge--Kutta) numerical methods.

The equation for the modified Kepler problem that we consider is 
    \begin{equation}%
	\label{eq:modifiedKepler}
         \ddot{x} +\alpha \sin(\Omega t)\,\dot x+ \gamma \frac{x}{|x|^3} = 0, \quad \alpha,\Omega,\gamma\in\mathbb{R}.
    \end{equation}
Clearly equation~\eqref{eq:modifiedKepler} is of the type~\eqref{eq:geneq}, the corresponding contact Hamiltonian being 
    \begin{equation}%
	\label{eq:cHKepler}
        \cH=\frac{|p|^2}{2}-\frac{\gamma}{|x|}+\alpha \sin(\Omega t)\,s\,.
    \end{equation}

\subsubsection{Error analysis}

The perturbed Kepler problem, and in particular the Hamiltonian~\eqref{eq:cHKepler}, satisfies the hypothesis of Proposition~\ref{errorpropagationprop}, and therefore we are allowed to estimate the error at each step by using expression~\eqref{eq:Prop10error}, with the modified Hamiltonian being
\begin{align}
    \Delta \cH = \frac1{{24 \left| q\right| ^5}} \bigg(
        &\alpha  \left| p\right| ^2 \left| q\right| ^5 \left(\alpha  \sin ^2(t \omega )-\omega  \cos (t \omega )\right)
         +2 \alpha  \left| q\right| ^2 p\cdot q \sin (t \omega ) \notag \\
        &+\alpha  s \omega ^2 \left| q\right| ^5 \sin (t \omega )
         -2 \alpha \left| q\right| ^4 \left(\alpha  \sin ^2(t \omega )+\omega  \cos (t \omega )\right) \notag \\
        &+2 \left| q\right| 
         +2 (p\cdot q)^2
         -p_1^2 q_2^2
         -p_2^2 q_1^2
         +2 p_1 p_2 q_1 q_2 
        \bigg),
    \label{modifiedkepler}
\end{align}
where
\begin{equation}
    \left|q\right|=q_1^2+q_2^2, \qquad \left|p\right|=p_1^2+p_2^2\,.
\end{equation}

Since the explicit expressions for the estimated errors are quite cumbersome and not particularly illuminating, we omit them in the text. However, in Figures~\ref{fig:pkeplererr0},~\ref{fig:pkeplererr1} and~\ref{fig:pkeplererr2} we report a comparison between the error estimate via the modified Hamiltonian for the second order integrator and the numerical error. The latter was calculated by comparing the trajectory obtained using the second order integrator to a much more accurate numerical solution obtained using a sixth order integrator. As the figures show, the error estimated by means of the modified Hamiltonian matches the numerical error quite accurately. However, the estimate fails to be an upper bound in the error, as was anticipated in Remark~\ref{rmk:errorsize}.

\begin{figure}[ht!]
    \centering
    \includegraphics[width=\onelinewidth]{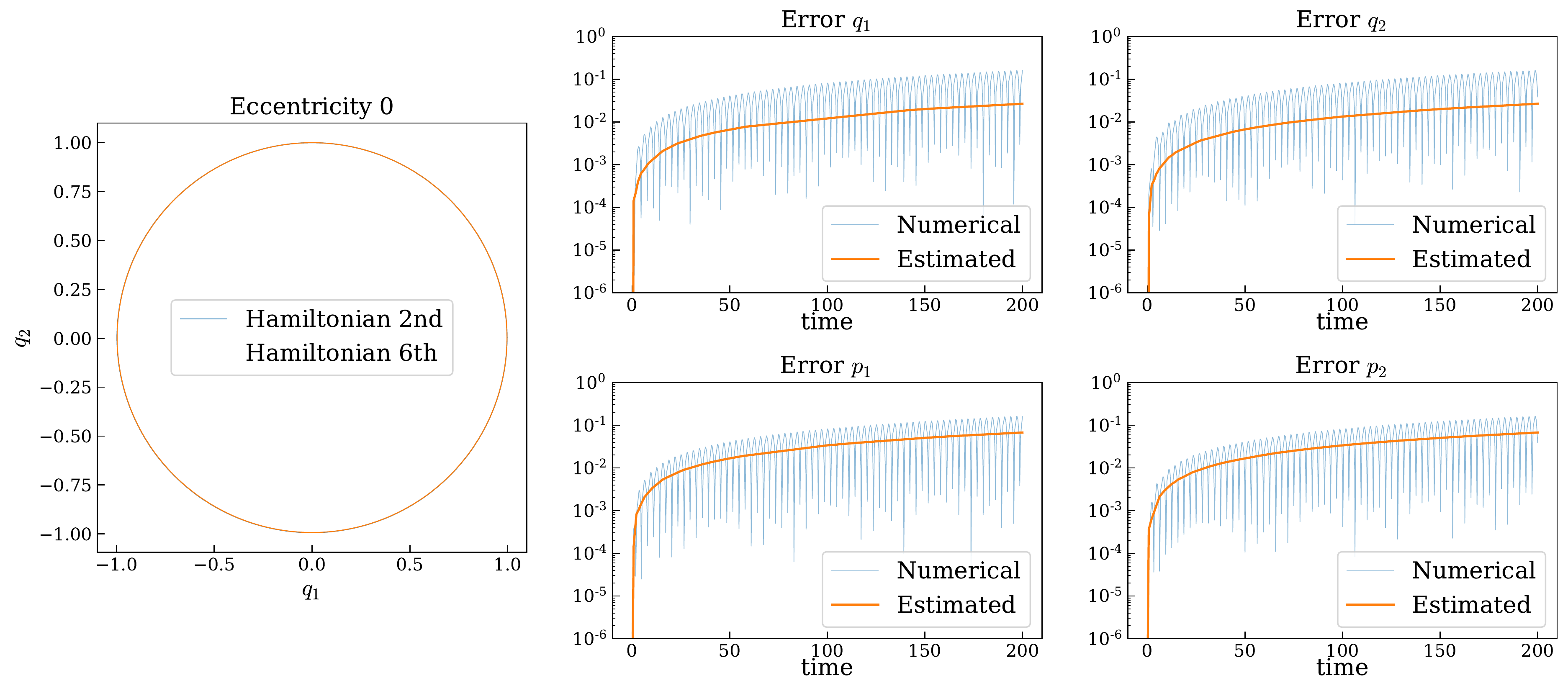}
    \captionsetup{width=\onelinewidth}
    \captionsetup{font=footnotesize}
    \caption{Error control for an integrated orbit of the perturbed Kepler problem \eqref{eq:cHKepler} with $\tau=0.05$, eccentricity $0$, $\Omega = 2\pi$, $\alpha=-0.01$ and $\gamma = 1$. We have denoted $q=(q_1,q_2)$ and $p=(p_1,p_2)$.}%
	\label{fig:pkeplererr0}
\end{figure}

\begin{figure}[ht!]
    \centering
    \includegraphics[width=\onelinewidth]{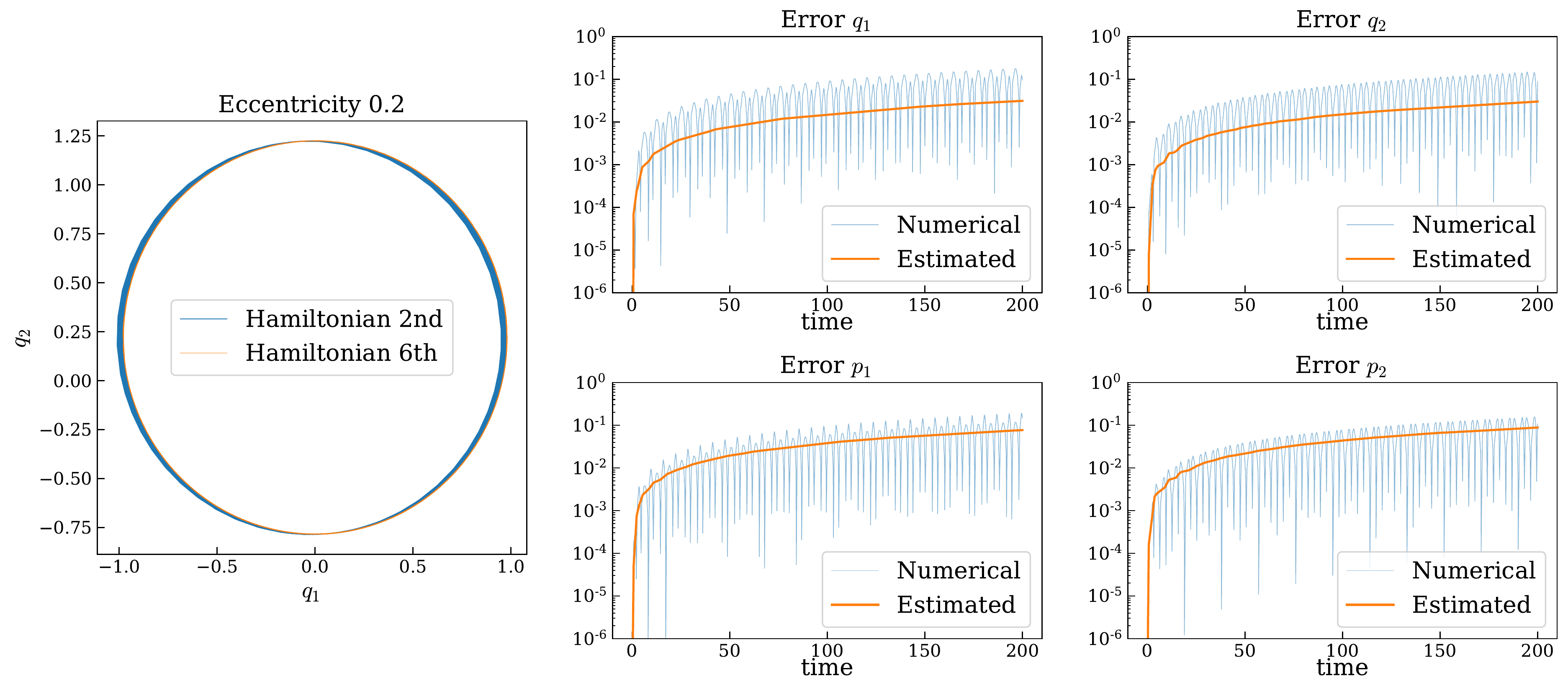}
    \captionsetup{width=\onelinewidth}
    \captionsetup{font=footnotesize}
    \caption{Error control for an integrated orbit of the perturbed Kepler problem \eqref{eq:cHKepler} with $\tau=0.05$, eccentricity $0.2$, $\Omega = 2\pi$, $\alpha=-0.01$ and $\gamma = 1$. We have denoted $q=(q_1,q_2)$ and $p=(p_1,p_2)$.}%
	\label{fig:pkeplererr1}
\end{figure}

\begin{figure}[ht!]
    \centering
    \includegraphics[width=\onelinewidth]{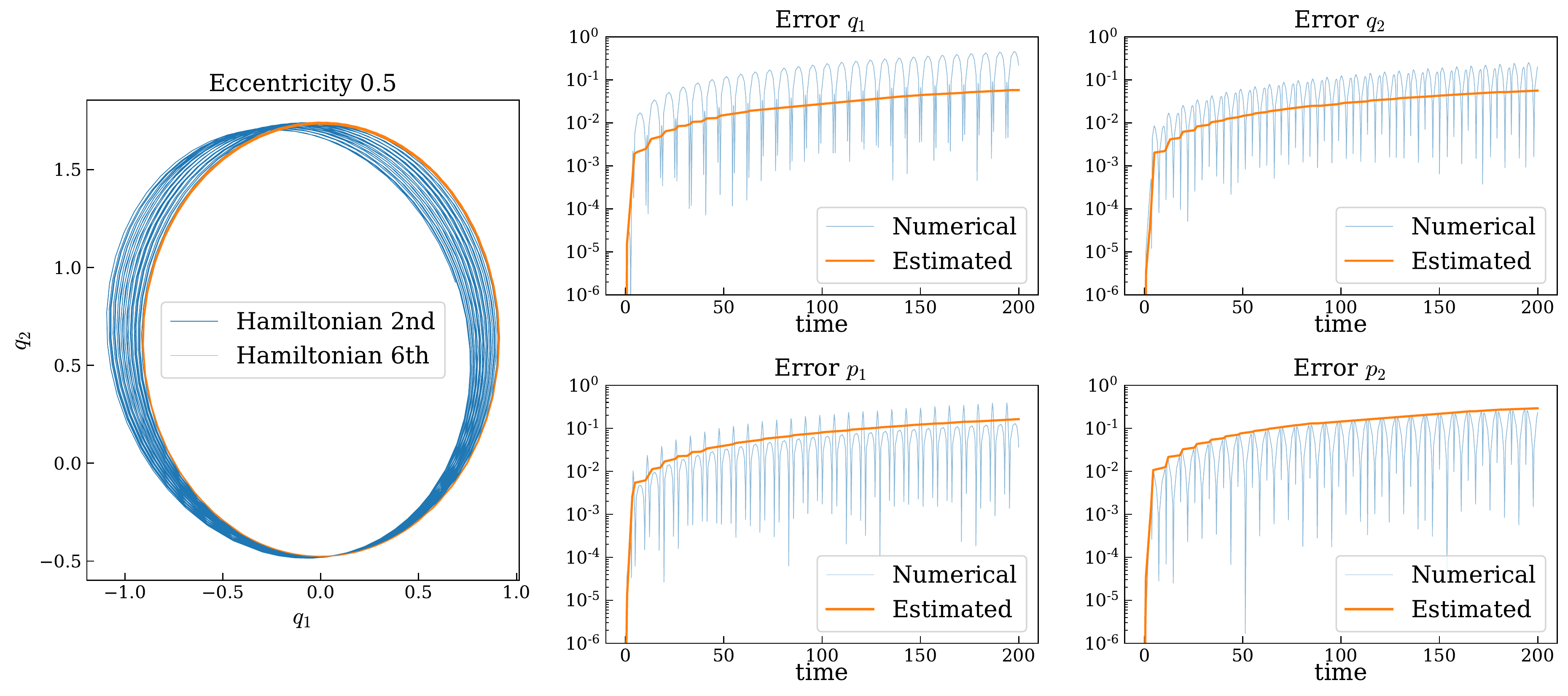}
    \captionsetup{width=\onelinewidth}
    \captionsetup{font=footnotesize}
    \caption{Error control for an integrated orbit of the perturbed Kepler problem \eqref{eq:cHKepler} with $\tau=0.05$, eccentricity $0.5$, $\Omega = 2\pi$, $\alpha=-0.01$ and $\gamma = 1$. We have denoted $q=(q_1,q_2)$ and $p=(p_1,p_2)$.}%
	\label{fig:pkeplererr2}
\end{figure}

\subsubsection{Numerical results}

In all the experiments presented in this section, we keep the value of $\alpha$ small, so that the trajectories have a structure comparable to the standard Kepler problem and are easier to discuss and interpret.
When using the Hamiltonian integrator of the sixth order we specify with A, B, C or E if we use, respectively, the approximate coefficients from column A, B, C of Table~\ref{numsol} or the exact coefficients from~\eqref{2n+2orderexact}.
The second order variational integrator is the one from~\cite{vermeeren2019contact}, while the fourth order variational integrator is the one introduced in Section~\ref{sec:Lagrangian}.

In all figures, the eccentricity of the trajectory is defined in the following sense: it is the eccentricity of the trajectory of an unperturbed Kepler problem with the same initial conditions.

In this example, our integrators show a remarkable stability at both second, fourth and sixth order for very large time step, 
when compared against a fixed--step Runge--Kutta method of fourth order (RK4 in the plots).

The Kepler problem is well--suited to emphasize the differences between our algorithms. As one expects, for large time steps, the solution obtained from the Runge--Kutta integrator drifts toward the singularity and explodes in a rather short amount of iterations (see Figure~\ref{fig:pkeplerstable}).
More interestingly, depending on the value of $\tau$ and the ellipticity of the trajectory, we can observe a remarkable difference in the behaviour of our contact integrators: the Hamiltonian integrators, even of high order, tend to show a more pronounced precession compared to the variational ones; on the other hand the former seem to be much more stable than the latter when the time step increases. See Figures~\ref{fig:pkeplercompare001} and~\ref{fig:pkeplercompare04}. Note that the different stability observed could be due to lower order of the variational integrator.

Further figures showing the total error and the error on the contact Hamiltonian function can be found with the simulation notebooks in~\cite{zenodoCode}. We decided to omit them from the paper as we did not find them clearer than the direct comparison of the solutions.

\begin{figure}[p]
    \centering
    \includegraphics[width=\onelinewidth]{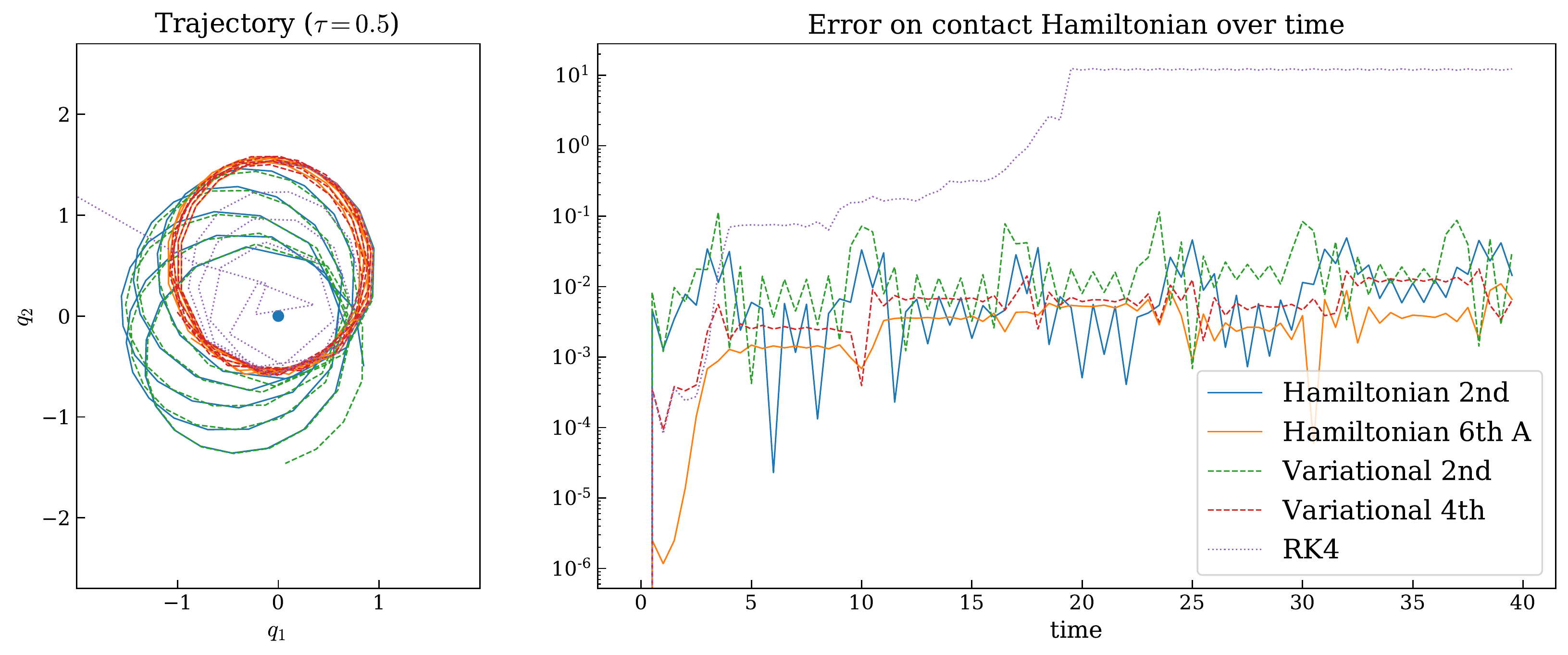}
    \captionsetup{width=\onelinewidth}
    \captionsetup{font=footnotesize}
    \caption{Integrated orbits with eccentricity $0.4$ for the perturbed Kepler problem \eqref{eq:cHKepler} with $\Omega = \pi$, $\alpha=-0.07$ and $\gamma = 1$. Here $\tau=0.5$ and the trajectories are integrated in the time interval $t\in[0,40]$. We can see that Runge--Kutta is diverging very quickly, while the other integrators are all stable and show a different degree of precession.}%
    \label{fig:pkeplerstable}
\end{figure}
\begin{figure}[p]
    \centering
    \includegraphics[width=\onelinewidth]{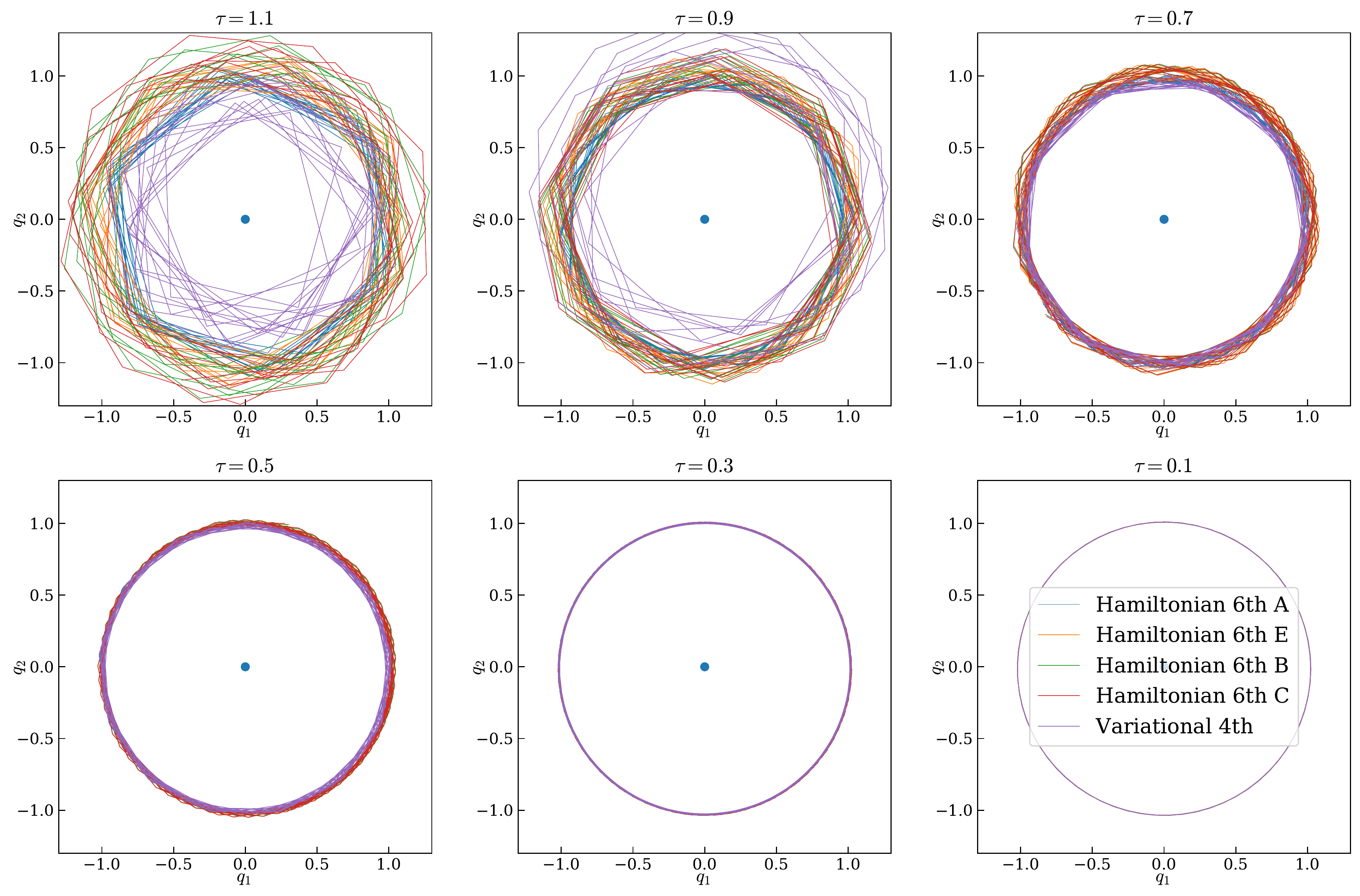}
    \captionsetup{width=\onelinewidth}
    \captionsetup{font=footnotesize}
    \caption{Integrated orbits with eccentricity $0.01$ for the perturbed Kepler problem \eqref{eq:cHKepler} with $\Omega = 2\pi$, $\alpha=-0.07$ and $\gamma = 1$. Here we let $\tau$ vary and integrate the trajectories in the time interval $t\in[0,80]$. We can see that for small eccentricity all the integrators are stable, while the Hamiltonian ones have better properties for large time steps.
    }%
    \label{fig:pkeplercompare001}
\end{figure}
\begin{figure}[ht!]
    \centering
    \includegraphics[width=\onelinewidth]{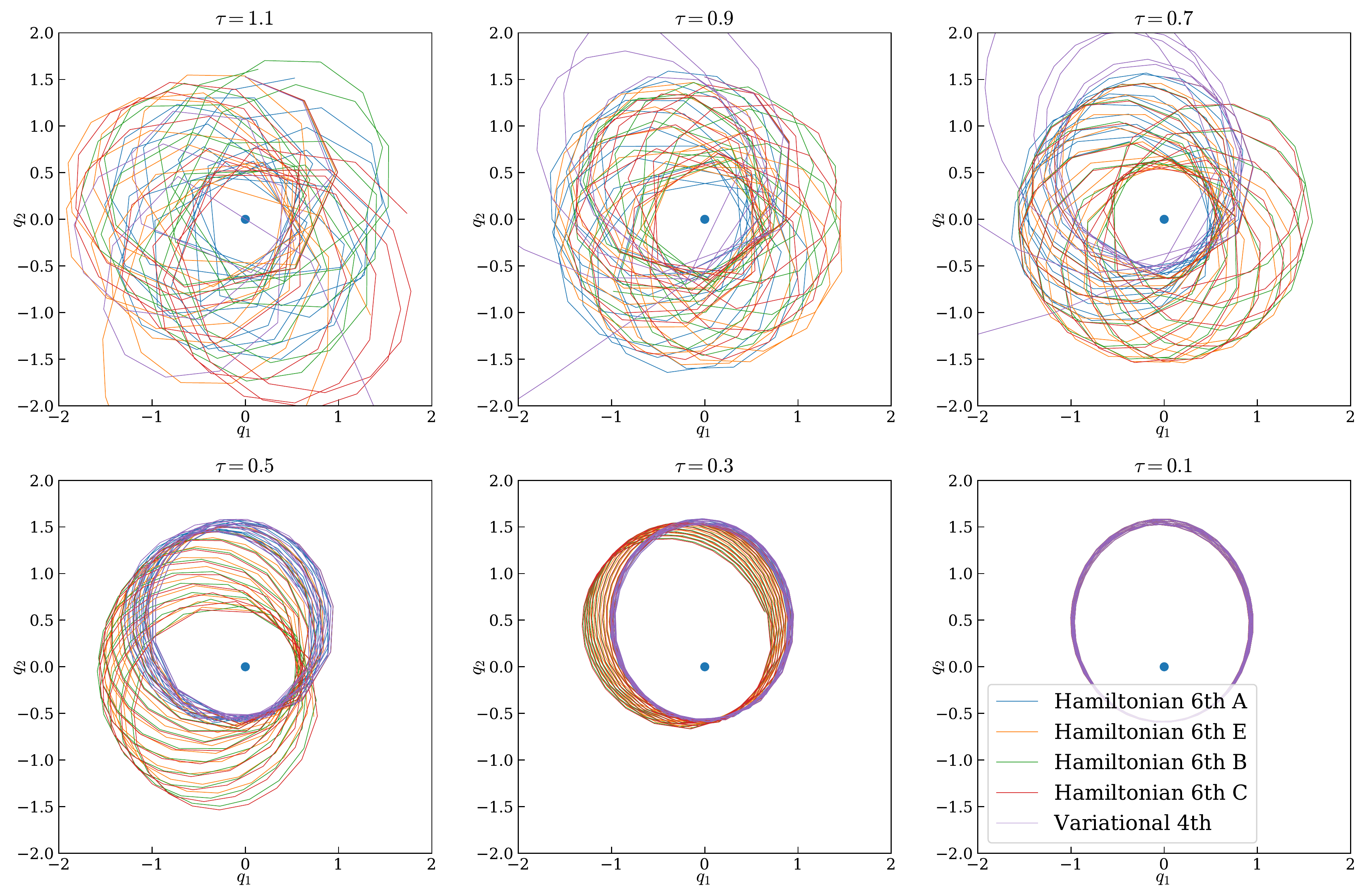}
    \captionsetup{width=\onelinewidth}
    \captionsetup{font=footnotesize}
    \caption{Integrated orbits with eccentricity $0.4$ for the perturbed Kepler problem \eqref{eq:cHKepler} with $\Omega = 2\pi$, $\alpha=-0.07$ and $\gamma = 1$. Here we let $\tau$ vary and integrate the trajectories in the time interval $t\in[0,80]$. In this case the variational integrator shows its limits for large time steps, while the Hamiltonian integrator remains very stable for all time steps, at the price of a more pronounced precession.}%
    \label{fig:pkeplercompare04}
\end{figure}

\subsection{The spin--orbit model}
In this section we consider the so--called spin--orbit model in the version presented in~\cite{gkolias2017hamiltonian}, trying to use the same notation as in the referenced paper as much as possible.

This model describes the motion of a small body, e.g.~a satellite, that moves around a larger body on a Keplerian orbit and rotates around its shorter principal axis with zero obliquity (see also~\cite{Celletti1990}). The corresponding Newton equation is of the form~\eqref{eq:geneq} and is given by a second--order time--dependent differential equation in the angle that describes the relative orientation of the 
longer principal axis with respect to a preassigned direction. The time variations in the moment of inertia of the satellite introduce an angular velocity--dependent term that accounts for the body’s rotation in addition to the external torques.

More precisely, the equation
    \begin{equation}%
	\label{eq:spinorbit1}
        \frac{d \Gamma}{dt} = \frac{dC}{dt} \dot\theta + C \ddot\theta = N_z(\theta, t)
    \end{equation}
describes the rotation of the body around its principal axis, with moment of inertia $C$. In the equation, $\Gamma$ represents the angular momentum of the body, $\dot\theta$ the angular velocity and $N_z$ the external torques.

For $C\neq 0$, equation~\eqref{eq:spinorbit1} can be rewritten as
    \begin{equation}%
	\label{eq:spinorbit2}
       \ddot\theta +\frac{dC}{dt} \frac{\dot\theta}{C} - \frac{N_z(\theta, t)}{C}=0\,,
    \end{equation}
which is clearly of the type~\eqref{eq:geneq}, with contact Hamiltonian
    \begin{equation}%
	\label{eq:cHspinorbit}
        \cH=\frac{p^2}{2}+ \frac{N_z(\theta, t)}{C}+\frac{dC}{dt} \frac{1}{C}\,s\,.
    \end{equation}

In the examples that follow, as in~\cite{gkolias2017hamiltonian}, we will consider a moment of inertia that varies periodically around an average value $\widetilde C$ with frequency $\Omega$, namely
\begin{equation}%
	\label{eq:c-periodic}
    C(t) = \widetilde{C} + \lambda \cos(\Omega t),
\end{equation}
and we will focus on two particular forms of the torque:
\begin{itemize}
    \item The gravitational torque for a triaxial rigid body on a Keplerian elliptical orbit around a point perturber:
\begin{align}
N_{z}^{\mathrm{triaxial}}(\theta, t)
&= -\frac{3 \nu(B-A)}{2} \frac{\alpha}{r}^3 \sin(2\theta - 2f) \\
&= -\frac{3 \nu(B-A)}{2} \sum_{m\in\mathbb{Z}\setminus\{0\}} W\left(\frac m2, e\right) \sin(2\theta - mt)
\end{align}
where $A<B<C$ are the moments of inertia in the body frame, $\alpha$ the semi--major axis, $\nu$ the orbital frequency, $r$ the distance between the bodies, $f$ the true anomaly and $W(m/2, e)$ are the coefficients of the Fourier expansion w.r.t.\ the periodic functions $r$ and $t$. We refer to~\cite{Celletti1990} for a clear explanation of the terminology. Note in particular that the coefficients $W(m/2, e)$, called Cayley coefficients, are power series of the eccentricity: some of their values can be found in~\cite[Table 2.1]{Celletti1990} or~\cite[pp. 271--274]{Cayley}. In the examples we will truncate the series dropping all the powers of the eccentricity that give a contribution smaller than the error.

\item The torque from a third body perturbation:
\[
N_{z}^{\mathrm{tidal}}(\theta, t) = \mu + a \dot\theta
\]
where $(\mu, a)\in\mathbb{R}_+\times\mathbb{R}_-$.
\end{itemize}

In what follows we want to show that a direct application of our integrators allows us to recover the phase space plots given in~\cite{gkolias2017hamiltonian}, including the capture into a synchronous resonance, with great accuracy and much less effort compared to the algorithm in~\cite{gkolias2017hamiltonian},  additionally guaranteeing the preservation of the underlying geometric structure.

In the first two examples we will assume $a=\mu=0$ and thus no torque from a third body perturbation is present. In any case, we will get a system of the form~\eqref{eq:geneq} with
\begin{align*}
    f(t) &= -a-\frac{\lambda\Omega \sin(\Omega t)}{\widetilde{C} + \lambda \cos(\Omega t)}\\
    \frac{\partial V(q,t)}{\partial q} &= -\mu + \frac{3}{2} \frac{\nu(B-A)}{\widetilde{C} + \lambda \cos(\Omega t)} \sum_{m\in\mathbb{Z}\setminus\{0\}} W\left(\frac m2, e\right) \sin(2\theta - mt).
\end{align*}

Before concluding, we would like to remark that the spin-orbit model falls in the class of models satisfying Proposition~\ref{errorpropagationprop}: it is thus possible to have a rather precise control on the error {by means of the modified Hamiltonian}. However, this is out of the scope of this section and we will not pursue it further.

\subsubsection{Numerical Results}

In Figures~\ref{fig:spinorbit} and~\ref{fig:spinorbit5} we can see the plots of the stroboscopic surface of section obtained by slicing the trajectories at times that are multiples of $2\pi$. These correspond respectively to~\cite[Figure 1 and Figure 5]{gkolias2017hamiltonian}.

For all the figures of this section we use $\widetilde C = 1$ and $\nu = 1$.
In Figure~\ref{fig:spinorbit}, depicting the resonant case $\Omega=1$, we use $B-A = e = 0.01$. In Figure~\ref{fig:spinorbit5}, depicting the non--resonant case $\Omega=\sqrt{2}$, we use $B-A = e = 0.04$.
In both cases the only external torque is the triaxal, i.e. $a = \mu = 0$.

\begin{figure}[ht!]
    \centering
    \includegraphics[width=\halflinewidth]{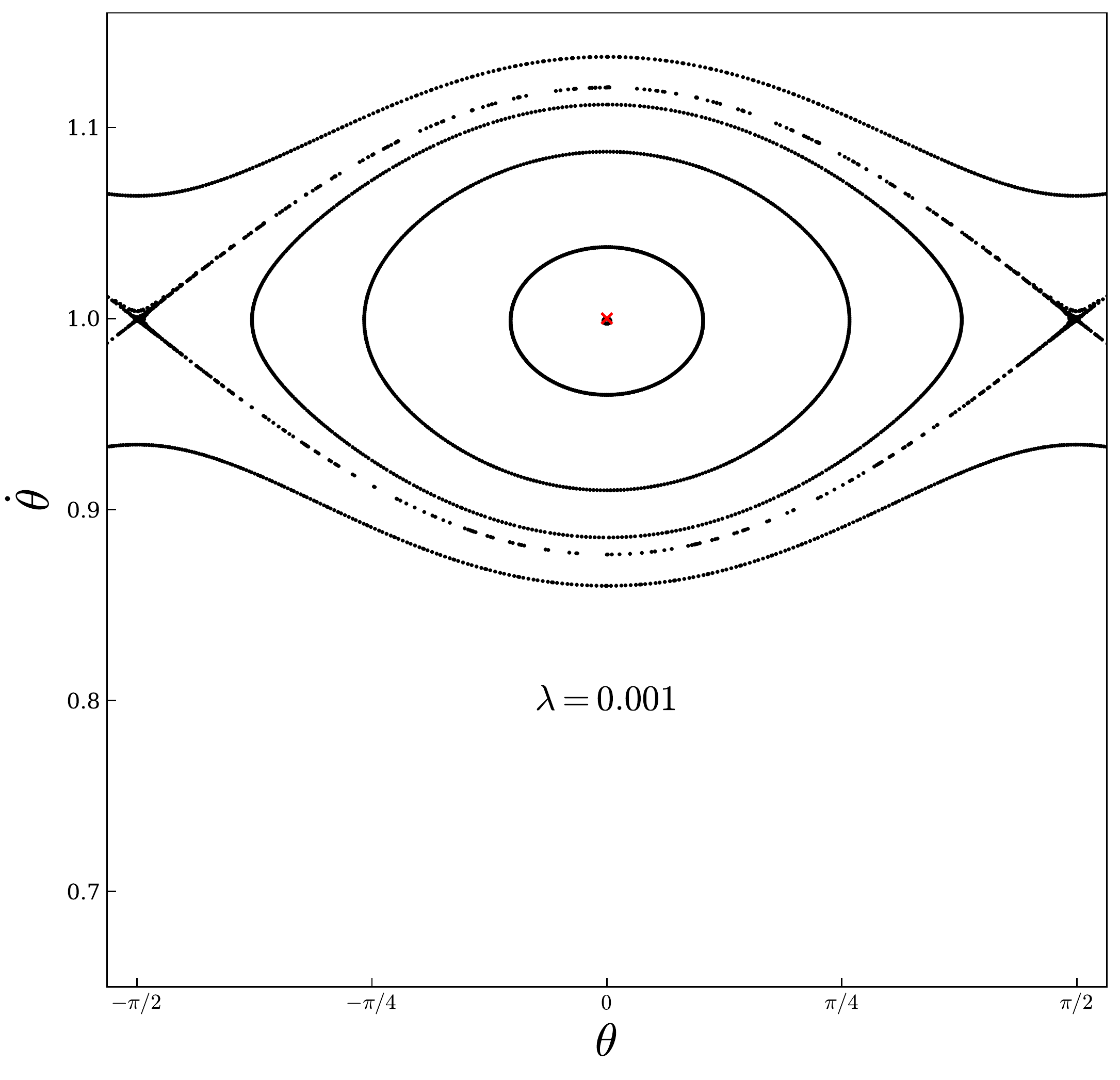}
    \includegraphics[width=\halflinewidth]{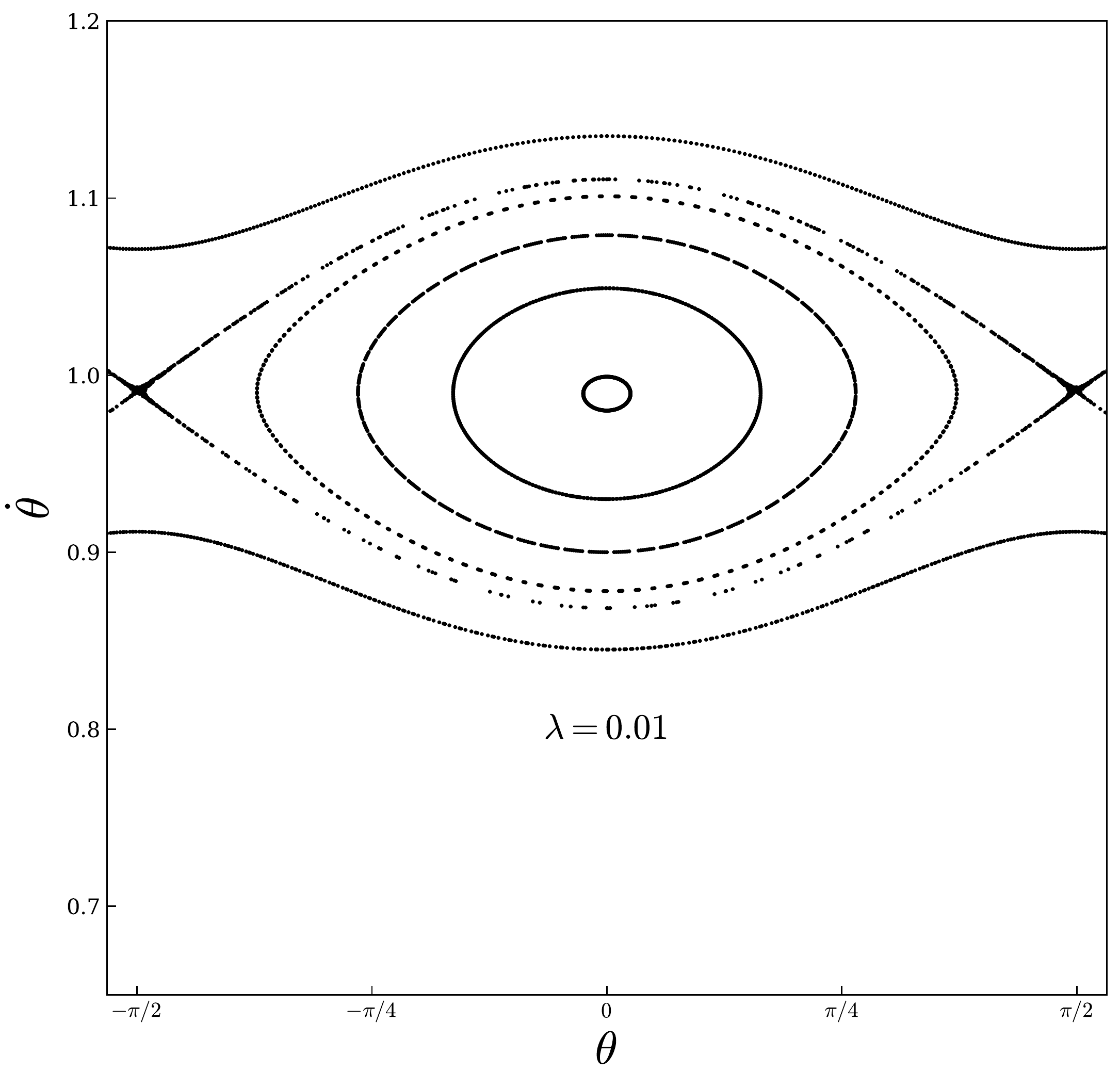}
    \includegraphics[width=\halflinewidth]{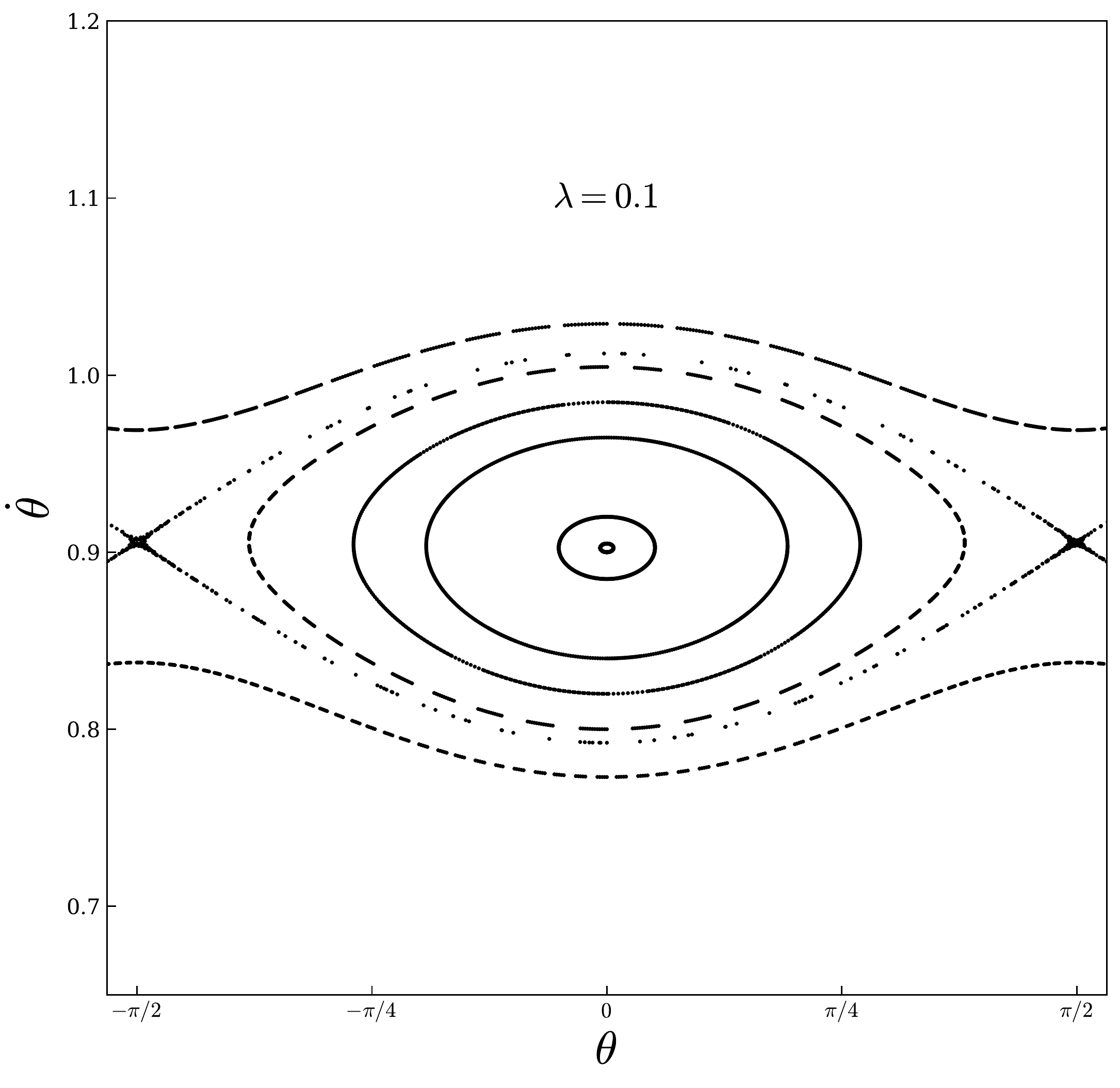}
    \includegraphics[width=\halflinewidth]{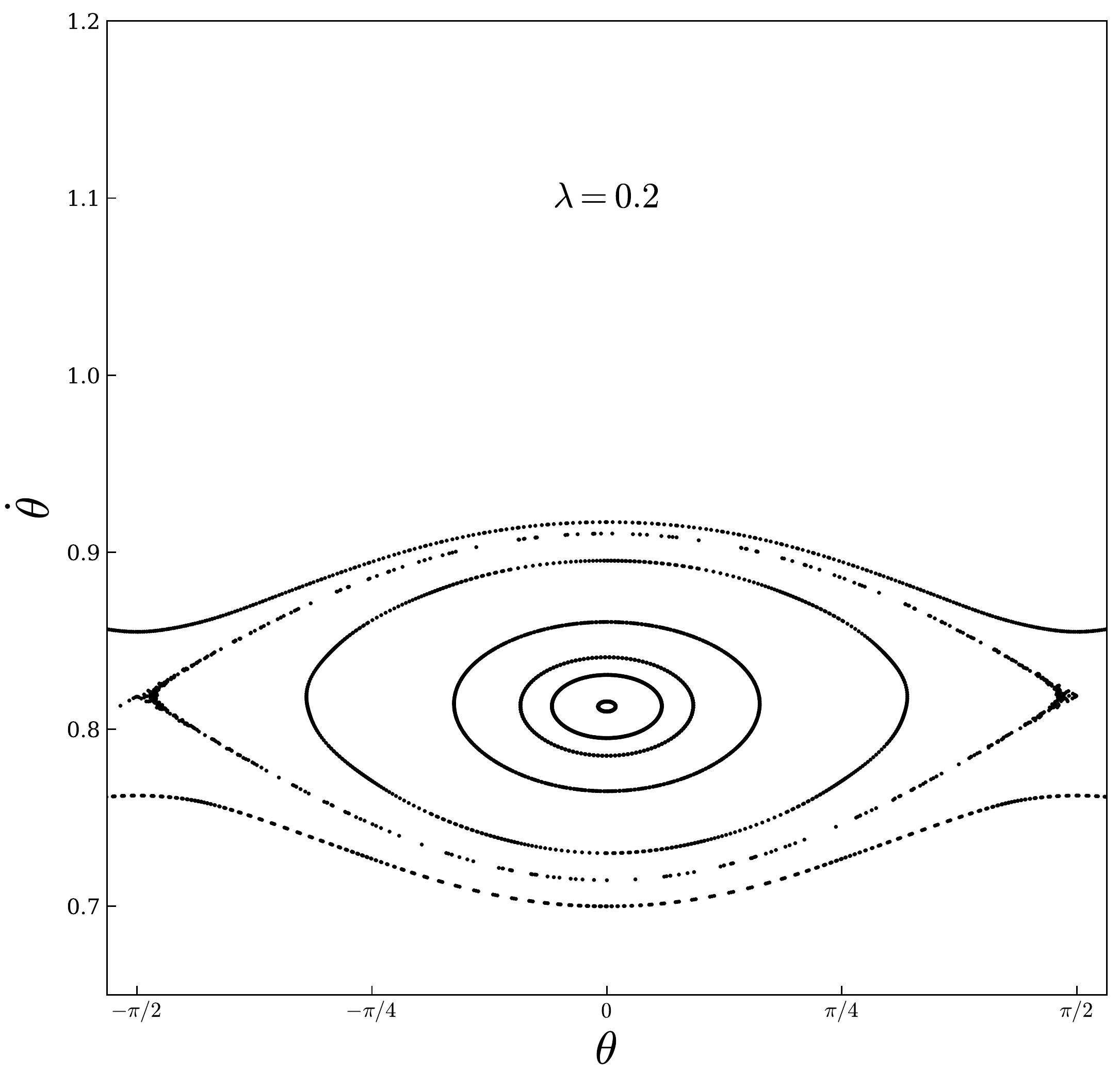}
    \captionsetup{width=\onelinewidth}
    \captionsetup{font=footnotesize}
    \caption{Poincar\'e surfaces of section for the spin-orbit problem \eqref{eq:cHspinorbit} in the resonant case $\Omega=1$.
             The coupling constant $\lambda$ is indicated in each graph.}%
    \label{fig:spinorbit}
\end{figure}

\begin{figure}[ht!]
    \centering
    \includegraphics[width=\onelinewidth]{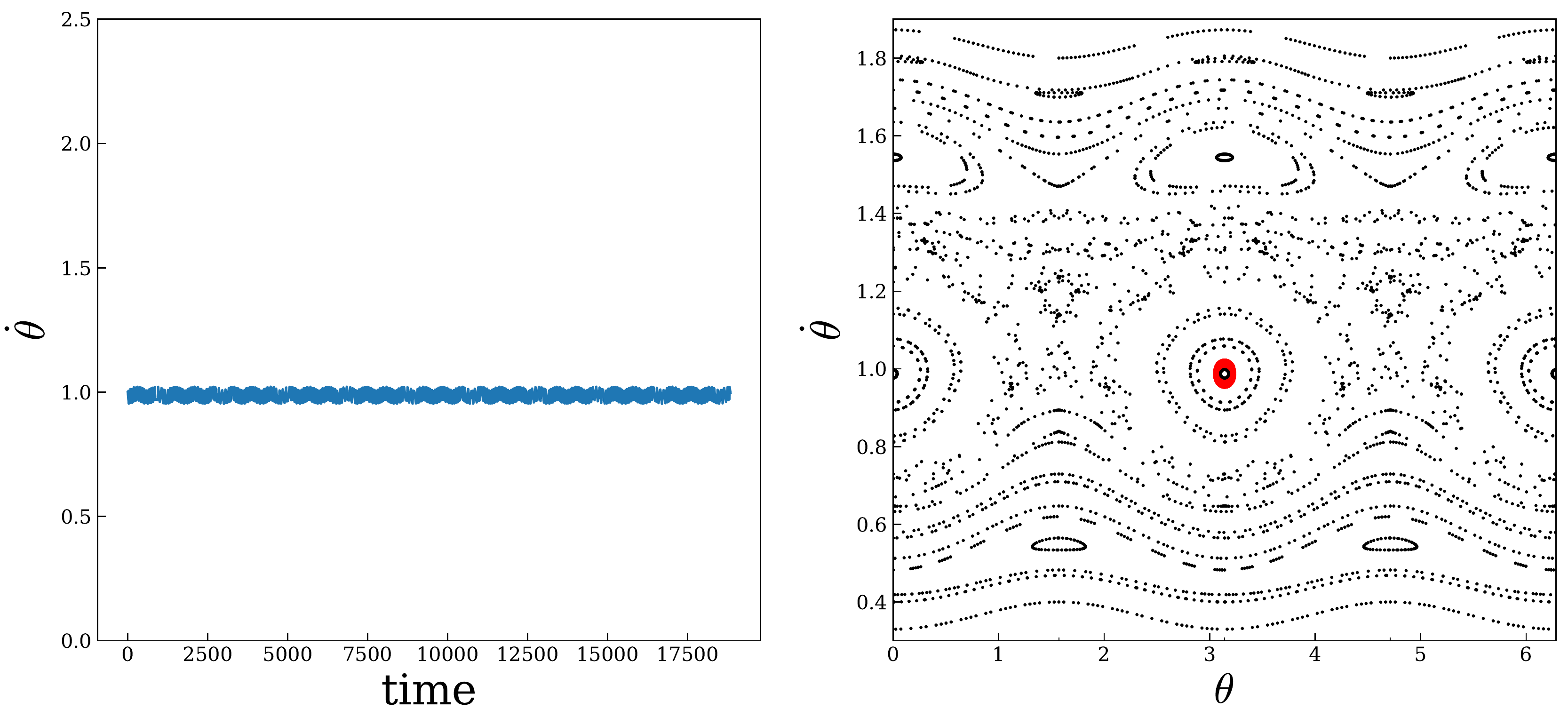}
    \includegraphics[width=\onelinewidth]{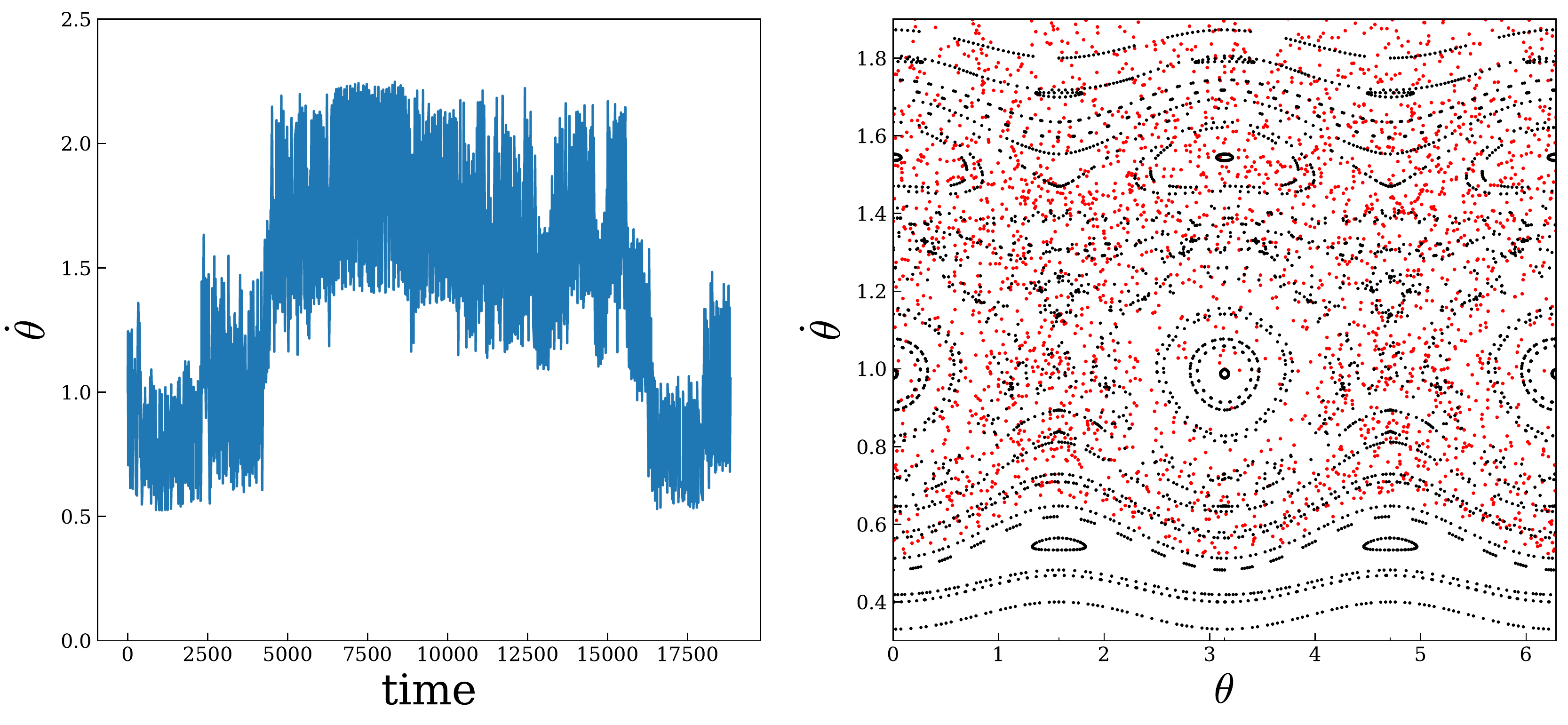}
    \captionsetup{width=\onelinewidth}
    \captionsetup{font=footnotesize}
    \caption{In the left panels we present the time evolution of $\dot\theta$ for the spin-orbit problem \eqref{eq:cHspinorbit} in the non--resonant case $\Omega = \sqrt{2}$ for the samples $\lambda = 0.01$ (top) and $\lambda = 0.2$ (bottom).
    The right panels show the Poincar\'e sections of the full model (red dots) compared to that of the conservative model $\lambda=0$ (black dots).}%
    \label{fig:spinorbit5}
\end{figure}

A direct comparison of Figure~\ref{fig:spinorbit5} and~\cite[Figure 5]{gkolias2017hamiltonian} will show that the Poincar\'e section of the conservative models and the trajectories, albeit qualitatively similar, are  not the same. This is due to the fact that, with the exception of $\lambda$ and $\Omega$, we do not know which value of the parameters is used in~\cite{gkolias2017hamiltonian}. For our comparison we opted for selecting a configuration of parameters producing a qualitatively similar conservative phase space. Remarkably, one can observe the same kind of transition to chaotic regime as the coupling parameter grows.

Finally, Figure~\ref{fig:spinorbit7} depicts the capture of the system into a resonance, in analogy to~\cite[Figure 7]{gkolias2017hamiltonian}. In this case we use the same values as in Figure~\ref{fig:spinorbit} with the exception of $a = \mu = 10^{-3}$, $\lambda = 10^{-4}$. Also in this case we can observe that our technique allows for an accurate description of the qualitative behavior.

In all cases the integration has been performed using the second--order contact Hamiltonian integrator with the rather large time step of $0.314$. However, we remark that no qualitative difference was noticed when using either a smaller or a slightly larger time step, nor when using higher order integrators.

\begin{figure}[ht!]
    \centering
    \includegraphics[width=\onelinewidth]{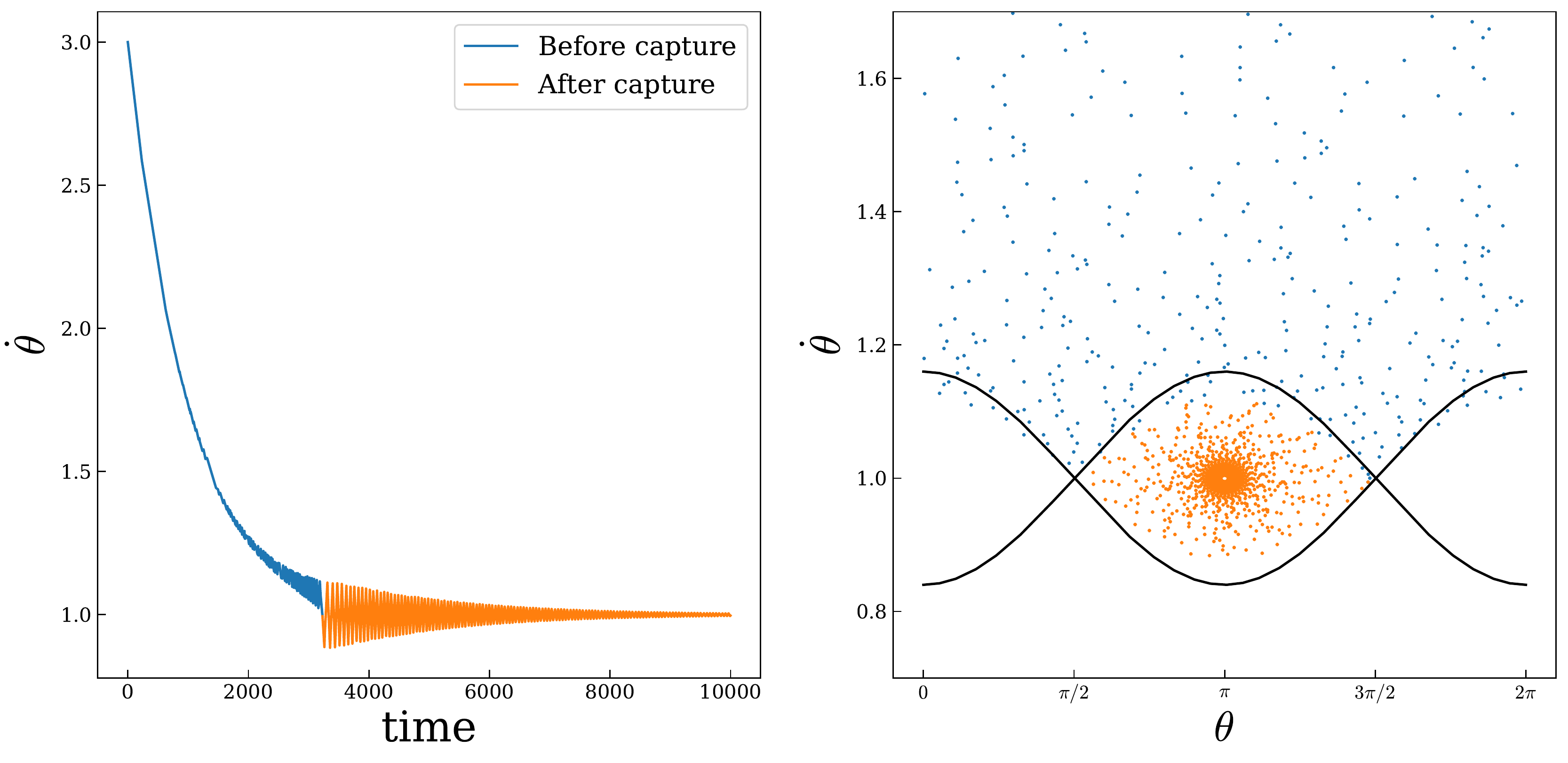}
    \captionsetup{width=\onelinewidth}
    \captionsetup{font=footnotesize}
    \caption{Time evolution of $\dot\theta$ for the spin-orbit problem \eqref{eq:cHspinorbit} in the case of a capture into the $1:1$ resonance (left).
             Poincar\'e section for this scenario, showing the capture into the synchronous resonance (right).}%
    \label{fig:spinorbit7}
\end{figure}

\subsection{The Lane--Emden equation}%
	\label{sec:laneemden}
The Lane--Emden equation
    \begin{equation}
   y''(x)+\frac{2}{x}\,y'(x)+y^n(x)=0, \qquad y(0)=1, \quad y'(0)=0
   \label{eq:laneemden}
    \end{equation}
is a nonlinear singular equation that is widely used in physics
to model isothermal gas spheres, such as e.g.~stars~\cite{chandrasekhar1957introduction}.
In equation~\eqref{eq:laneemden}, $y$ is a dimensionless variable related to the density of the star, and $x$ is a dimensionless distance from the center, and the density is normalised so that the central density is 1.
Finally, the integer $n$, called the barotropic index, depends on the nature of the gas.

Clearly, for $n\neq 0,1$ the Lane--Emden equation is nonlinear. Besides, due to the $1/x$ term, it is singular in the initial condition.
Therefore, the study of solutions of equation~\eqref{eq:laneemden} is at the same time physically important and mathematically challenging.
For this reason a large number of {numerical} schemes have been proposed in the literature,
based on different approaches such as e.g.~series expansions, spectral methods, perturbation techniques, neural network methods, and so on (see~\cite{ahmad2017neural,parand2010lagrangian} for comprehensive lists of the techniques used so far).

For us the relevance of equation~\eqref{eq:laneemden} lies in the observation that such equation belongs to the class~\eqref{eq:geneq} and therefore it has a natural description in terms of contact geometry. In fact, the contact Hamiltonian for the Lane--Emden equation is
    \begin{equation}%
	\label{eq:cHLE}
    \cH=\frac{p^2}{2}+\frac{y^{n+1}}{n+1}+\frac{2}{x}s\,,
    \end{equation}
which is of the type~\eqref{eq:mainCH}, and therefore can in principle be treated using the contact integrators constructed above. 

We will see that, despite the singularity in the last term of~\eqref{eq:cHLE}, which in principle prevents the applicability of Proposition~\ref{errorpropagationprop}, we can still perform a 
heuristic error analysis of the kind discussed in Section~\ref{sec:modifiedhamiltonian}. Using the
exact solutions which are available for the cases $n=0,1,5$~\cite{pandey2012solution}, {we can observe} that the corresponding predictions are still in agreement with the real error.

\subsubsection{Error Analysis}%
	\label{subLE:error}
As we argued in Section~\ref{sec:modifiedhamiltonian}, one of the advantages of our contact integrators is the fact that they allow for a relatively straightforward error analysis based on the modified Hamiltonian. 
Unfortunately the Lane--Emden model falls into the singular setting exposed in Remark~\ref{Rem:singular1}.
Indeed, computing the modified Hamiltonian corresponding to the second order integrator~\eqref{eq:modifiedhamiltonian} for the Lane--Emden model leads to 
    \begin{equation}
    \Delta \mathcal{H} '=
    \frac{1}{24} \left(
        2 y^{2 n}-n p^2 y^{n-1}
        +\frac{2 p y^n}{x}
        +\frac{2 p^2}{x^2}
        -\frac{2 s}{x^3}    
    \right),
    \end{equation}
Since $x=\# steps\cdot \tau$, we see that
indeed the correction term in the modified Hamiltonian is no longer of order $\cO(\tau^2)$, 
but presents additional terms of order $\cO(\tau)$, $\cO(1)$ and, for $s(0) \neq 0$, $\cO(\tau^{-1})$. 
This is a direct consequence of the appearance of a singularity in the independent variable in the Hamiltonian function~\eqref{eq:cHLE}, as anticipated in Remark~\ref{Rem:singular1}.

Nevertheless, we can still get useful information: if we use the modified Hamiltonian to compute the 
local errors in the coordinates, from~\eqref{errqstep1}--\eqref{errsstep1}, we find
    \begin{align}
	\Delta {y}&=\frac{\tau^3}{12} \bigg\lvert \frac{x y^n (y-n p x)+2 p y}{x^2 y} \bigg\rvert,\label{eq:LEerrorq}\\
	\Delta {p}&= \frac{\tau^3}{24}\bigg\lvert 4 n y^{2 n-1} - \frac{n p y^{n-2} ((n-1) p x-2 y)}{x} - \frac{2 p}{x^3} \bigg\rvert,\label{eq:LEerrorp}\\
    \Delta {s}&= \frac{\tau^3}{24}\bigg\lvert n p^2 y^{n-1} + 2 y^{2 n} - \frac{2 \left(p^2 x+s\right)}{x^3} \bigg\rvert.\label{eq:LEerrors}
    \end{align}
\noeqref{eq:LEerrorq}\noeqref{eq:LEerrorp}\noeqref{eq:LEerrors}
Here, since the singularity is at $x=0$, it is crucial to control the error in the first step. For instance, a direct application of the standard fourth order Runge--Kutta method cannot overcome this difficulty.

To quantify the error at the initial step, we recall that the initial condition for the integration of the Lane--Emden equation is $y(0)=1,p(0)=0$, and that at the initial step we have $x=\tau$. Thus~\eqref{eq:LEerrorq}--\eqref{eq:LEerrors} for the initial step give
\begin{align}
	\Delta {y}&= \frac{\tau^2}{12},\label{eq:LEerrorqinitial}\\
	\Delta {p}&=\frac{n \tau ^3}{3},\label{eq:LEerrorpinitial}\\
	\Delta {s}&= \frac{1}{12} \left|s(0) -\tau^3\right|\label{eq:LEerrorsinitial}\,,
\end{align}
which are all bounded and the errors in the physical variables are small for small $\tau$. 

Note that having an explicit estimate for the error at the first step may be used to compensate for such error in more sophisticated ways, for example to choose modified initial conditions to reduce the error.

In summary, we note again that, due to the singularity in time, 
the method is {no longer of} second order in $\tau$
and Proposition~\ref{errorpropagationprop} does not hold.
Therefore one should study the error propagation according to equation~\eqref{serieserror}, which is beyond the scope of the present analysis.
However, the initial errors are bounded, and so are the ensuing errors at later steps, as the numerical experiments below show for the cases $n=0,1,5$. 

\subsubsection{Numerical results}%
	\label{subLE:numerical}

The results of the numerical error analysis for the values of $n=0,1,5$, for which we have exact solutions to use for comparisons, are presented in Figures~\ref{fig:laneemden=0},~\ref{fig:laneemden=1},~\ref{fig:laneemden=5}. 
In all cases we used the second order contact Hamiltonian integrator with a relatively large time step $\tau=0.2$.
We can clearly observe that the numerical trajectories follow the exact solutions closely in all the cases. For small time step the error is well below the upper bound estimated using the modified Hamiltonian. Despite the inapplicability of Proposition~\ref{errorpropagationprop}, the error bound appears close when not above the numerical error even when using a large timestep.

Finally, Figure~\ref{fig:laneemdenbundle} shows the bundle of Lane--Emden densities for $n=0,\dots,9$. In all cases, we imposed the natural initial condition $(p_0, y_0) = (0, 1)$ and were able to integrate the solutions without requiring any special precaution.

\begin{figure}
    \centering
    \includegraphics[width=\onelinewidth]{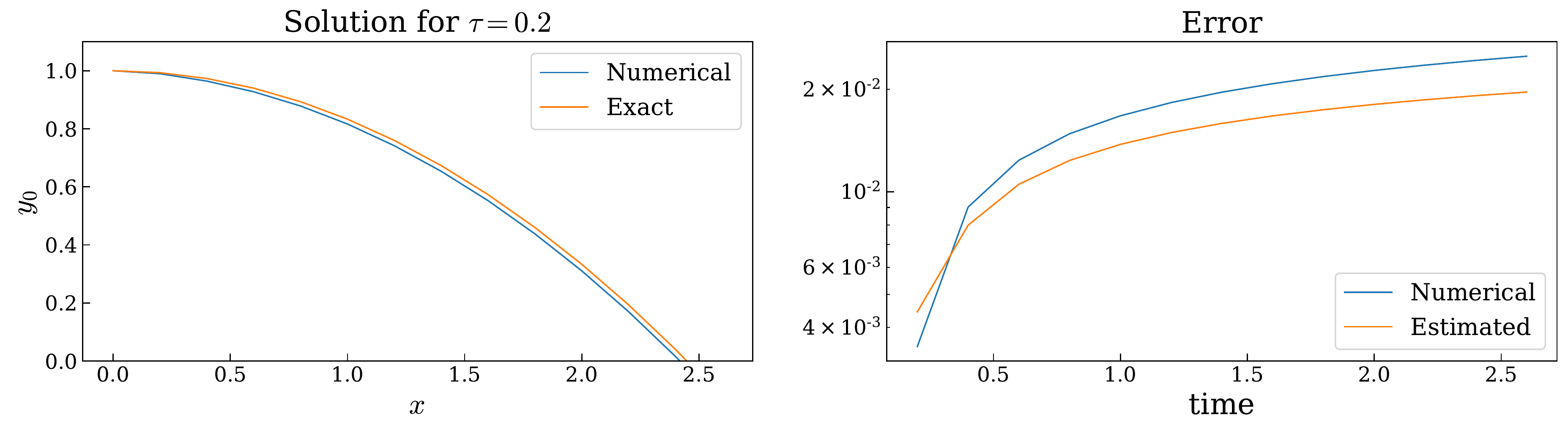}
    \includegraphics[width=\onelinewidth]{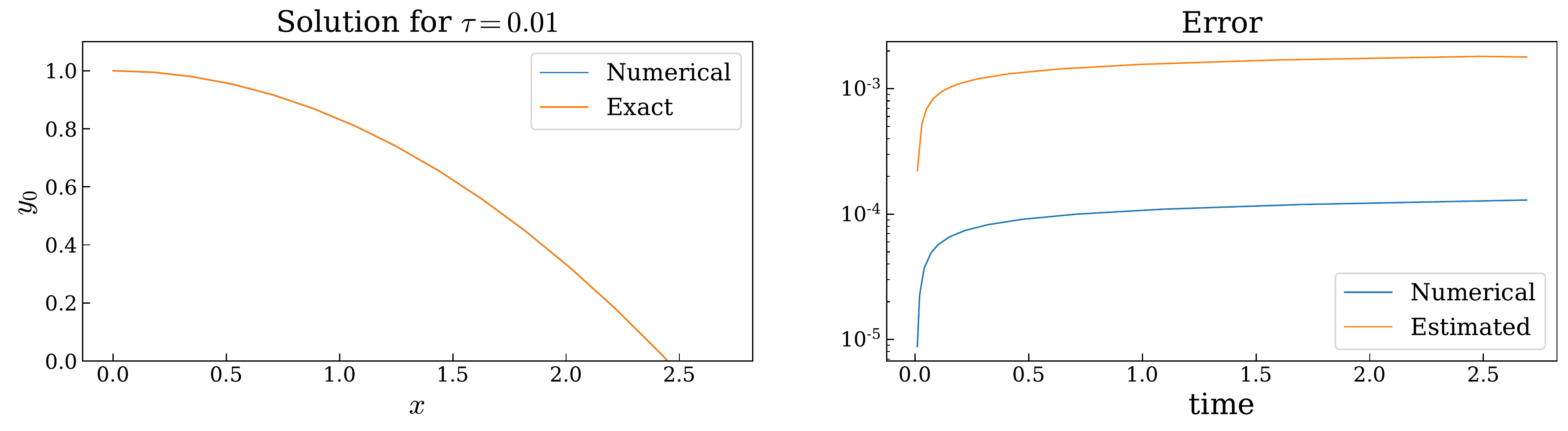}
	\captionsetup{font=footnotesize}
	\captionsetup{width=\onelinewidth}
    \caption{Exact and numerical solution of the Lane--Emden model \eqref{eq:cHLE} with $n=0$ (left). Plot of the total error against the one estimated using the modified Hamiltonian (right).}%
    \label{fig:laneemden=0}
\end{figure}

\begin{figure}
    \centering
    \includegraphics[width=\onelinewidth]{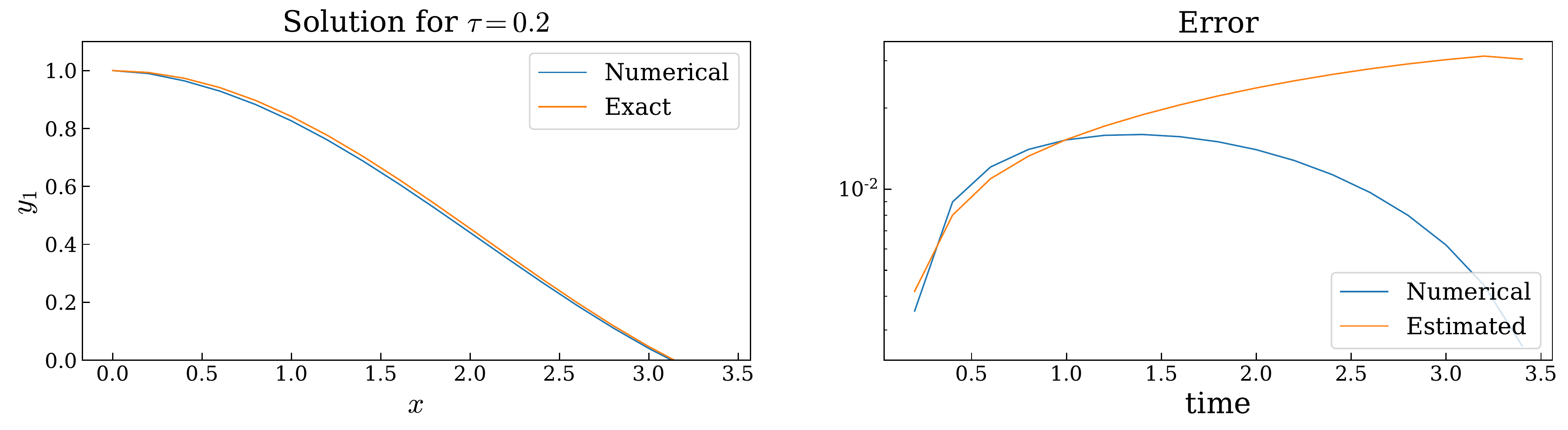}
    \includegraphics[width=\onelinewidth]{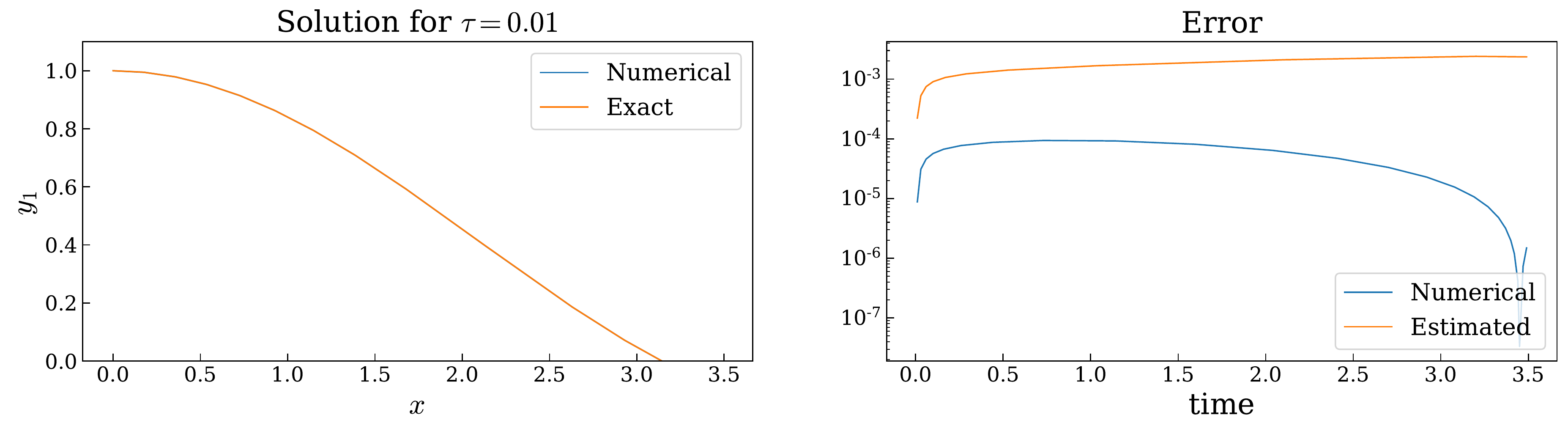}
	\captionsetup{font=footnotesize}
	\captionsetup{width=\onelinewidth}
    \caption{Exact and numerical solution of the Lane--Emden model \eqref{eq:cHLE} with $n=1$ (left). Plot of the total error against the one estimated using the modified Hamiltonian (right).}%
    \label{fig:laneemden=1}
\end{figure}

\begin{figure}
    \centering
    \includegraphics[width=\onelinewidth]{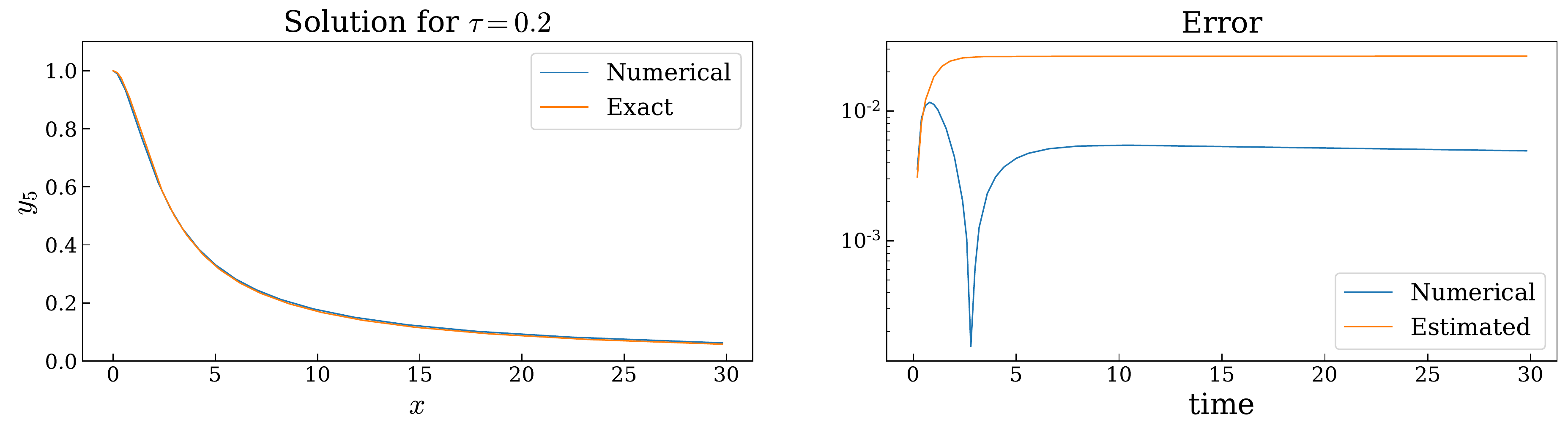}
    \includegraphics[width=\onelinewidth]{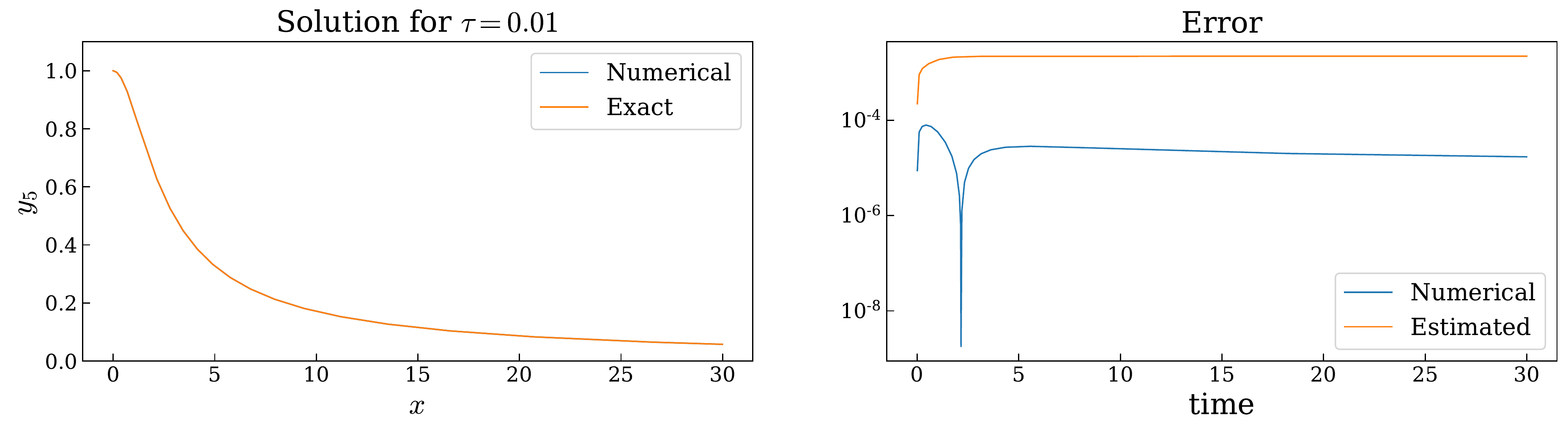}
    \captionsetup{font=footnotesize}
	\captionsetup{width=\onelinewidth}
    \caption{Exact and numerical solution of the Lane--Emden model \eqref{eq:cHLE} with $n=5$ (left). Plot of the total error against the one estimated using the modified Hamiltonian (right).}%
    \label{fig:laneemden=5}
\end{figure}

\begin{figure}
    \centering
    \includegraphics[width=\onelinewidth]{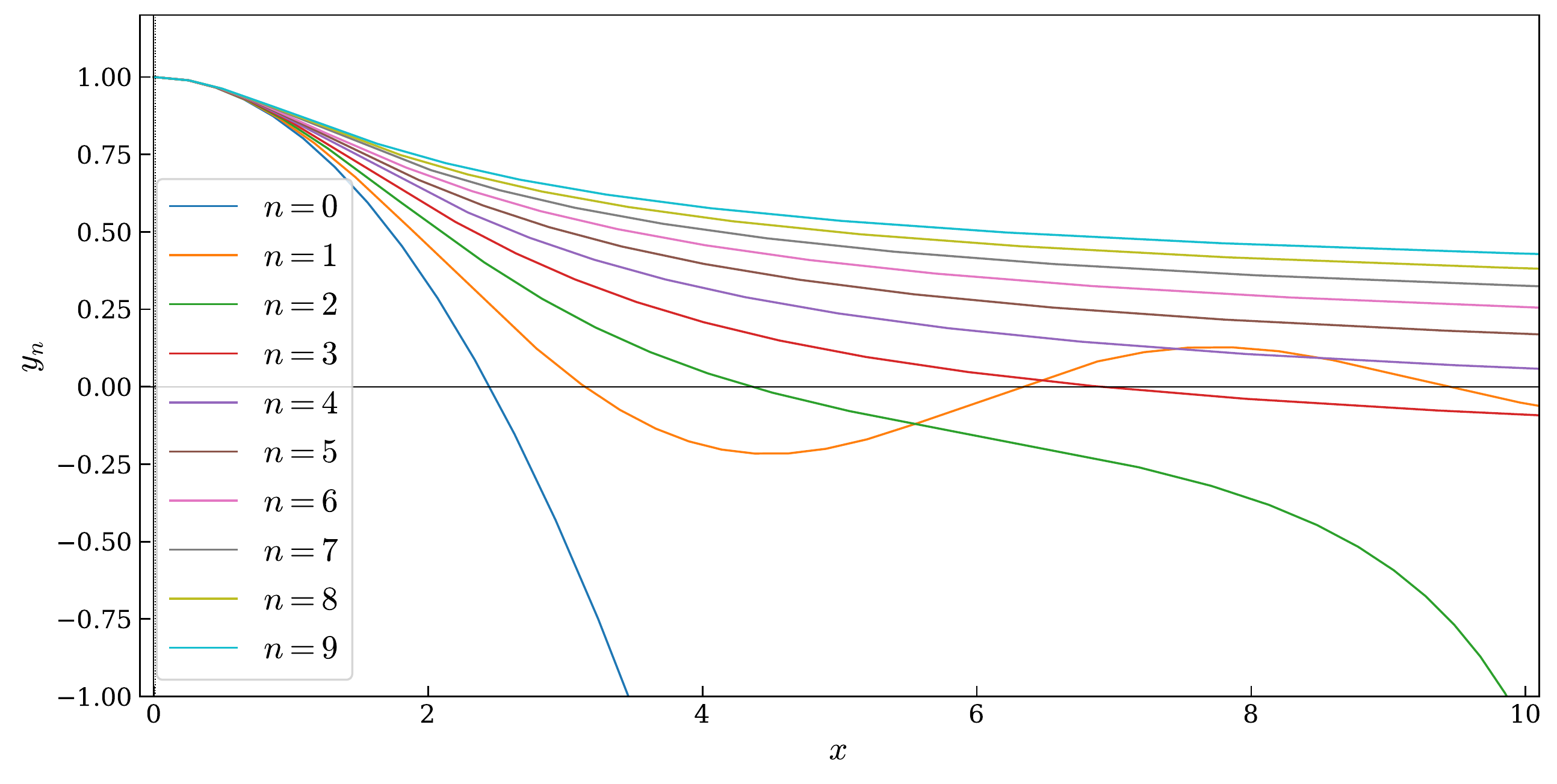}
	\captionsetup{font=footnotesize}
	\captionsetup{width=\onelinewidth}
    \caption{Numerical solutions of the Lane--Emden model \eqref{eq:cHLE} with $n=0,\dots,9$ and $\tau=0.01$ (left).}%
    \label{fig:laneemdenbundle}
\end{figure}

\section{Conclusions}%
	\label{sec:conclusions}
In this work we have considered the class of equations~\eqref{eq:geneq}
and showed that it admits a natural Hamiltonisation in the context of
contact geometry. 

We have argued that such Hamiltonisation is important both for analytical and for numerical studies of the trajectories, and then we have proceeded with a thorough analysis of the numerical case.
In particular, we have developed variational and Hamiltonian integrators for equations of the type~\eqref{eq:geneq} and we have applied them to some important examples arising in celestial mechanics: 
the modified Kepler problem, 
the spin--orbit model and the Lane--Emden equation.
In all the cases studied we have seen that our geometric framework simplifies the numerical analysis, and provides an explicit modified Hamiltonian, which can be used to obtain error estimates and as a starting point for further refinements.

It is important to emphasise at this point some of the different strengths and limitations of the presented methods. 

The variational integrators are very general, but their implementation is more cumbersome and may require specific adaptations to the problems. They can be applied to any regular contact Hamiltonian, including non--separable ones (e.g.~the integration of Schwarzschild geodesics with and without damping or the weak turbulent non--linear Schr\"odinger equations discussed in~\cite{Tao16}). The variational approach generally delivers implicit integrators, which is reflected in that they are significantly more stable in certain cases.

The contact Hamiltonian integrators can only be applied to separable Hamiltonians, but within this class they are extremely versatile and easy to implement. They are explicit methods, making them faster, but sometimes less stable, than their variational counterparts.

The theoretical analysis of the orbits of systems of type~\eqref{eq:geneq} based on the contact Hamiltonian and Lagrangian formulations falls in a very active field of research; we postpone such analysis for the models presented here to future works.

\subsection*{Acknowledgements}
The authors would like to thank the Bernoulli Institute for the hospitality. This research was partially supported by MS's starter grant and NWO Visitor Travel Grant 040.11.698 that sponsored the visit of AB at the Bernoulli Institute. MS and FZ research is supported by the NWO project 613.009.10. MV is supported by the DFG through the SFB Transregio 109 ``Discretization in Geometry and Dynamics''.

\section*{Compliance with ethical standards}
\subsection*{Conflict of interest} The authors declare that they have no conflict of interest.

\bibliographystyle{abbrvnat_mv}
\bibliography{contact_int.bib}

\end{document}